\documentclass[12pt]{amsart}
\usepackage{amssymb}
\usepackage{enumerate}
\usepackage{graphicx}
\usepackage[all,cmtip]{xy}
 \usepackage{tikz-cd}

\makeatletter
\@namedef{subjclassname@2010}{%
  \textup{2010} Mathematics Subject Classification}
\makeatother
\newtheorem{thm}{Theorem}[section]

\newtheorem{corollary}[thm]{Corollary}
\newtheorem{lemma}[thm]{Lemma}
\newtheorem{proposition}[thm]{Proposition}
\theoremstyle{definition}
\newtheorem{definition}[thm]{Definition}
\newtheorem{remark}[thm]{Remark}
\newtheorem{example}[thm]{Example}
\frenchspacing
\textwidth=13.5cm
\textheight=23cm
\parindent=16pt
\topmargin=-0.5cm


\begin{document}
\baselineskip=15pt
\title{Uniformly $S$-projective relative to a module and its dual}

\author[M. Adarbeh]{Mohammad Adarbeh $^{(\star)}$}
\address{Department of Mathematics, Birzeit University, Birzeit,  Palestine}
\email{madarbeh@birzeit.edu}
\author[M. Saleh]{Mohammad Saleh }
\address{Department of Mathematics, Birzeit University, Birzeit,  Palestine}
\email{msaleh@birzeit.edu}

\thanks{$^{(\star)}$ Corresponding author}
\date{}

\begin{abstract}
In this article, we introduce the notion of uniformly $S$-projective ($u$-$S$-projective) relative to a module. Let $S$ be a multiplicative subset of a ring $R$ and $M$ an $R$-module. An $R$-module $P$ is said to be $u$-$S$-projective relative to $M$ if for any $u$-$S$-epimorphism $f:M\to N$, the induced map 
 $\text{Hom}_{R}(P,f): \text{Hom}_R(P,M)\to \text{Hom}_R(P,N)$ is a $u$-$S$-epimorphism. Dually, we also introduce $u$-$S$-injective relative to a module. Some properties of these notions are discussed. Several characterizations of $u$-$S$-semisimple modules are given in terms of these notions. The notions of $u$-$S$-quasi-projective and $u$-$S$-quasi-injective modules are also introduced, and some of their properties are discussed. 
\end{abstract}

\subjclass[2010]{13Cxx, 13C11, 13C12, 16D40, 16D60.}

\keywords{$u$-$S$-projective relative to a module, $u$-$S$-injective relative to a module, $u$-$S$-quasi-projective, $u$-$S$-quasi-injective}

\maketitle

\section{Introduction}
In this paper, all rings are commutative with a nonzero identity, and all modules are unitary. A subset $S$ of a ring $R$ is said to be a multiplicative subset of $R$ if $1 \in S$, $0 \notin S$, and $s_1s_2\in S$ for all $s_1,s_2 \in S$. Throughout, $R$ denotes a commutative ring and $S$ a multiplicative subset of $R$. Recall that an $R$-module $M$ is called a $u$-$S$-torsion module if there exists $s \in S$ such that $sM = 0$ \cite[Definition 2.1]{Z}. Let $M, N, L$ be $R$-modules.
    \begin{enumerate}
    \item[(i)] An $R$-homomorphism $f: M \to N$ is called a $u$-$S$-monomorphism ($u$-$S$-epimorphism) if $\text{Ker}(f)$ ($\text{Coker}(f)$) is a $u$-$S$-torsion module \cite{Z}.
    \item[(ii)] An $R$-homomorphism $f: M \to N$ is called a $u$-$S$-isomorphism if $f$ is both a
 $u$-$S$-monomorphism and a $u$-$S$-epimorphism \cite{Z}.
\item[(iii)]  An $R$-sequence $M \xrightarrow{f} N \xrightarrow{g} L$ is said to be $u$-$S$-exact if there exists $s \in S$ such that $s\text{Ker}(g) \subseteq \text{Im}(f)$ and $s\text{Im}(f) \subseteq \text{Ker}(g)$. A $u$-$S$-exact sequence $0 \to M \to N \to L \to 0$ is called a short $u$-$S$-exact sequence \cite{ZQ}. 

\item[(iv)] A short $u$-$S$-exact sequence $0\to M \xrightarrow{f} N \xrightarrow{g} L\to 0$ is said to be $u$-$S$-split (with respect to $s$) if there is $s\in S$ and an $R$-homomorphism $f':N \to M$ such that $f'f=s1_{M}$, where $1_{M}:M\to M$ is the identity map on $M$ \cite{ZQ}.
 \end{enumerate}
  W. Qi and H. Kim et al. \cite{QK} introduced the notion of $u$-$S$-injective modules. They defined an $R$-module $E$ to be $u$-$S$-injective if the induced sequence
 \begin{center}
     $0 \to \text{Hom}_{R}(C,E)\to \text{Hom}_R(B,E)\to \text{Hom}_R(A,E) \to 0$
 \end{center}
 is $u$-$S$-exact for any $u$-$S$-exact sequence $0 \to
A\to B\to C \to 0$. X. L. Zhang and W. Qi \cite{ZQ} introduced the notion of $u$-$S$-projective modules as a dual notion of $u$-$S$-injective modules. They defined an $R$-module $P$ to be $u$-$S$-projective if the induced sequence
 \begin{center}
$0 \to \text{Hom}_{R}(P,A)\to \text{Hom}_R(P,B)\to \text{Hom}_R(P,C) \to 0$     
 \end{center}
 is $u$-$S$-exact for any $u$-$S$-exact sequence $0 \to
A\to B\to C \to 0$. They also introduced the notions of $u$-$S$-semisimple modules and $u$-$S$-semisimple rings. An $R$-module $M$ is said to be $u$-$S$-semisimple if any $u$-$S$-short exact sequence $0\to A\to M\to C\to 0$ is $u$-$S$-split. A ring $R$ is said to be $u$-$S$-semisimple if any free $R$-module is $u$-$S$-semisimple. Recall that an $R$-module $U$ is said to be projective relative to a module $M$ if for each epimorphism $f:M\to N$, the induced map $$\text{Hom}_{R}(U,f):\text{Hom}_R(U,M)\to \text{Hom}_R(U,N)$$ is an epimorphism \cite{AF}. Dually, an $R$-module $U$ is said to be injective relative to a module $M$ if for each monomorphism $g:K\to M$, the induced map $$\text{Hom}_{R}(g,U): \text{Hom}_R(M,U)\to \text{Hom}_R(K,U)$$ is an epimorphism \cite{AF}. An $R$-module $M$ is said to be quasi-projective (quasi-injective) if $M$ is  projective (injective) relative to $M$ \cite{AF}. The ''uniformly'' $S$-versions of projective (injective) relative to a module as well as the ''uniformly'' $S$-versions of quasi-projective (quasi-injective) modules are given in this article (see Definitions \ref{def1} and \ref{def2}). The notions of $u$-$S$-quasi-injective and $u$-$S$-quasi-projective modules have been generalized by M. Adarbeh and M. Saleh in \cite{MM1} and \cite{MM2}, respectively.

In Section 2, we first introduce the notions of $u$-$S$-projective relative to a module and $u$-$S$-injective relative to a module, and then we investigate some properties of these notions. For example, in Theorems \ref{thm1} and \ref{thm2}, we show that an $R$-module $U$ is $u$-$S$-projective ($u$-$S$-injective) relative to a module $M$ if and only if for any submodule $K$ of $M$, $\text{Hom}_R(U,\eta_K)$ ($\text{Hom}_R(i_K,U)$) is a $u$-$S$-epimorphism, where $\eta_K:M\to \frac{M}{K}$ is the natural map and $i_K:K\to M$ is the inclusion map. We prove in Theorems \ref{thm6} and \ref{thrm} that an $R$-module $M$ is $u$-$S$-semisimple if and only if every $R$-module is $u$-$S$-injective relative to $M$ if and only if every $R$-module is $u$-$S$-projective relative to $M$ if and only if for every injective $R$-module $U$ and $f\in \text{End}_{R}(U)$, $\text{Ker}(f)$ is $u$-$S$-injective relative to $M$ if and only if for every projective $R$-module $U$ and $f\in \text{End}_{R}(U)$, $\text{Coker}(f)$ is $u$-$S$-projective relative to $M$. In Proposition \ref{localr}, we give a new characterization of projective (injective) relative to a module.

In Section 3, we introduce $u$-$S$-quasi-projective and $u$-$S$-quasi-injective modules, and then we discuss some properties of these notions. By Remark \ref{rem2} (1), we have the following implications:
\begin{center}
    $u$-$S$-projective $\Rightarrow$ $u$-$S$-quasi-projective\end{center} and
\begin{center}
 $u$-$S$-injective $\Rightarrow$ $u$-$S$-quasi-injective.
 \end{center} 
The above implications are irreversible in general (see Examples \ref{ex1} and \ref{ex2}). In Proposition \ref{local}, we give a local characterization of quasi-projective (quasi-injective) modules. Finally, in Theorem \ref{thm7}, we give a characterization of $u$-$S$-semisimple rings in terms of $u$-$S$-quasi-injective and $u$-$S$-quasi-projective modules.

\section{$u$-$S$-projective ($u$-$S$-injective) relative to a module }\label{d}

We start this section with the following definition:

\begin{definition}\label{def1}
   Let $S$ be a multiplicative subset of a ring $R$ and $M$ an $R$-module.
\begin{enumerate}
       \item[(i)] An $R$-module $P$ is said to be $u$-$S$-projective relative to $M$ if for any $u$-$S$-epimorphism $f:M\to N$, the induced map 
 $$\text{Hom}_{R}(P,f): \text{Hom}_R(P,M)\to \text{Hom}_R(P,N)$$ is a $u$-$S$-epimorphism.
       \item[(ii)] An $R$-module $E$ is said to be $u$-$S$-injective relative to $M$ if for any $u$-$S$-monomorphism $f:K\to M$, the induced map 
 $$\text{Hom}_{R}(f,E): \text{Hom}_R(M,E)\to \text{Hom}_R(K,E)$$ is a $u$-$S$-epimorphism.
\end{enumerate}   
\end{definition}

 \begin{remark}\label{rem1}
Let $S$ be a multiplicative subset of a ring $R$ and let $P,E$ be $R$-modules. 
\begin{enumerate}
    \item[(a)] By \cite[Proposition 2.9]{ZQ}, $P$ is $u$-$S$-projective if and only if $P$ is $u$-$S$-projective relative to every $R$-module $M$.
    \item[(b)] By \cite[Proposition 2.5]{ZQ}, $E$ is $u$-$S$-injective if and only if $E$ is $u$-$S$-injective relative to every $R$-module $M$.
    \end{enumerate}
\end{remark}

For an $R$-module $M$, an $R$-homomorphism $f:A\to B$, and a submodule $K$ of $M$ ($K\leq M$), let $f_{*}$ ($f^{*}$) denote the map $\text{Hom}_{R}(M,f)$ ($\text{Hom}_{R}(f,M)$), $i_K:K\to M$ denote the inclusion map, and $\eta_K:M\to \frac{M}{K}$ denote the natural map. 

\begin{thm}\label{thm1}
  Let $S$ be a multiplicative subset of a ring $R$ and let $M,P$ be $R$-modules. Then the following statements are equivalent:
\begin{enumerate}
    \item[(1)] $P$ is $u$-$S$-projective relative to $M$;
    \item[(2)] for any epimorphism $g:M\to N$, the map 
 $$g_*: \text{Hom}_R(P,M)\to \text{Hom}_R(P,N)$$ is a $u$-$S$-epimorphism;
    \item[(3)] for any $K\leq M$, $(\eta_K)_{*}$ is a $u$-$S$-epimorphism.
\end{enumerate}
\end{thm}

\begin{proof}
    $(1)\Rightarrow (2)$: Clear.\\
$(2)\Rightarrow (3)$: Apply $(2)$ to the epimorphism $\eta_{K}:M\to \frac{M}{K}$.\\
    $(3)\Rightarrow (1)$: Let $f:M\to N$ be a $u$-$S$-epimorphism and $K=\text{Ker}(f)$. Then $g:\frac{M}{K}\to N$ given by $g(x+K)=f(x)$, $x\in M$, is a $u$-$S$-isomorphism and $f=g\eta_K$, where $\eta_{K}:M\to \frac{M}{K}$ is the natural map. By \cite[Lemma 2.1]{ZQ}, there is a $u$-$S$-isomorphism $h:N\to \frac{M}{K}$ and $t\in S$ such that $hg =t1_\frac{M}{K}$. So $hf=hg\eta_K=t\eta_K$. Since $h$ is a $u$-$S$-monomorphism, $t'\text{Ker}(h)=0$ for some $t'\in S$. Now by $(3)$, $(\eta_K)_{*}:\text{Hom}_R(P,M)\to \text{Hom}_R(P,\frac{M}{K})$ is a $u$-$S$-epimorphism. So there is $s\in S$ such that $s\text{Hom}_R(P,\frac{M}{K})\subseteq \text{Im}(\eta_K)_*$. Take $\alpha \in \text{Hom}_R(P,N)$. Then $h\alpha\in \text{Hom}_R(P,\frac{M}{K})$. So $sh\alpha= (\eta_K)_{*}(\beta)=\eta_K\beta$ for some $\beta \in \text{Hom}_R(P,M)$. Hence $tsh\alpha = t\eta_K\beta=hf\beta$. It is easy to check $t'ts\alpha=t'f\beta=ft'\beta$. So $t'ts\alpha\in \text{Im}(f_*)$. Thus $f_*$ is a $u$-$S$-epimorphism and therefore, $(1)$ holds.   
    \end{proof}

The following result is the dual of Theorem \ref{thm1}.

\begin{thm}\label{thm2}
  Let $S$ be a multiplicative subset of a ring $R$ and let $E,M$ be $R$-modules. Then the following statements are equivalent:
\begin{enumerate}
    \item[(1)] $E$ is $u$-$S$-injective relative to $M$; 
    \item[(2)] for any monomorphism $f:K\to M$, the map 
 $$f^{*}: \text{Hom}_R(M,E)\to \text{Hom}_R(K,E)$$ is a $u$-$S$-epimorphism; 
    \item[(3)] for any $K\leq M$, $(i_K)^{*}$ is a $u$-$S$-epimorphism.
\end{enumerate}
\end{thm}

\begin{corollary}\label{coro}
    Let $S$ be a multiplicative subset of a ring $R$ and let $M,U$ be $R$-modules. If $U$ is projective (injective) relative to $M$, then $U$ is $u$-$S$-projective ($u$-$S$-injective) relative to $M$.
\end{corollary}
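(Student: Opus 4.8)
The plan is to reduce the statement to the characterizations already established in Theorems \ref{thm2} and \ref{thm1}, and then to observe that an ordinary epimorphism (respectively monomorphism) is a special case of a $u$-$S$-epimorphism (respectively $u$-$S$-monomorphism), so that the hypothesis is in fact strictly stronger than what needs to be checked.

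First I would treat the projective case. By Theorem \ref{thm2}, to show that $U$ is $u$-$S$-projective relative to $M$ it suffices to verify condition $(2)$, namely that for every genuine epimorphism $g:M\to N$ the induced map $g_*:\text{Hom}_R(U,M)\to \text{Hom}_R(U,N)$ is a $u$-$S$-epimorphism. But if $U$ is projective relative to $M$, then by definition $g_*$ is already a genuine epimorphism for every such $g$, so $\text{Coker}(g_*)=0$. Since the zero module is annihilated by $1\in S$, it is $u$-$S$-torsion, and hence $g_*$ is in particular a $u$-$S$-epimorphism. This establishes condition $(2)$ of Theorem \ref{thm2}, and therefore $U$ is $u$-$S$-projective relative to $M$. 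Dually, for the injective case I would invoke Theorem \ref{thm1}: it is enough to check its condition $(2)$, that for every monomorphism $f:K\to M$ the map $f^{*}:\text{Hom}_R(M,U)\to \text{Hom}_R(K,U)$ is a $u$-$S$-epimorphism. If $U$ is injective relative to $M$, then $f^{*}$ is a genuine epimorphism, so again $\text{Coker}(f^{*})=0$ is $u$-$S$-torsion and $f^{*}$ is a $u$-$S$-epimorphism; Theorem \ref{thm1} then yields that $U$ is $u$-$S$-injective relative to $M$.

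There is essentially no computational obstacle here; the whole content is the observation that the classes of ordinary epimorphisms and monomorphisms sit inside the classes of $u$-$S$-epimorphisms and $u$-$S$-monomorphisms (take $s=1$), so that the defining lifting and extension properties for the ordinary relative notions amount to a stronger requirement imposed on a smaller test class. The only point that requires a little care is recognizing that the equivalent forms $(2)$ in Theorems \ref{thm2} and \ref{thm1} are phrased in terms of genuine epimorphisms and monomorphisms of $M$, which is precisely what the hypothesis supplies; routing the argument through those equivalences, rather than through the raw Definition \ref{def1}, is what makes the proof immediate.
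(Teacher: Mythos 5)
Your proof is correct and is exactly the argument the paper intends: the corollary is stated immediately after Theorems \ref{thm2} and \ref{thm1} precisely so that one checks their condition $(2)$, where a genuine epimorphism of Hom-groups has zero cokernel and is therefore trivially a $u$-$S$-epimorphism. Nothing further is needed.
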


\begin{lemma}\label{lemm1}
     Let $S$ be a multiplicative subset of a ring $R$, $f_i:A_i\to B_i$, $i=1,\cdots,n$ be $R$-homomorphisms, and $\bigoplus\limits_{i=1}^{n}f_i:\bigoplus\limits_{i=1}^{n}A_i\to \bigoplus\limits_{i=1}^{n}B_i$ be the direct sum map of $(f_i)_{i=1}^{n}$. Then $\bigoplus\limits_{i=1}^{n}f_i$ is a $u$-$S$-epimorphism if and only if each $f_i$ is a $u$-$S$-epimorphism.
\end{lemma}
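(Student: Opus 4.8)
The plan is to reduce the statement to a simple fact about $u$-$S$-torsion modules and finite direct sums, after first identifying the cokernel of the direct sum map. I would begin by observing that the image of $\bigoplus_{i=1}^{n}f_i$ is exactly $\bigoplus_{i=1}^{n}\text{Im}(f_i)$, so that
\[
\text{Coker}\Big(\bigoplus_{i=1}^{n}f_i\Big)=\frac{\bigoplus_{i=1}^{n}B_i}{\bigoplus_{i=1}^{n}\text{Im}(f_i)}\cong \bigoplus_{i=1}^{n}\frac{B_i}{\text{Im}(f_i)}=\bigoplus_{i=1}^{n}\text{Coker}(f_i).
\]
By definition, $\bigoplus_{i=1}^{n}f_i$ is a $u$-$S$-epimorphism precisely when this cokernel is $u$-$S$-torsion, and each $f_i$ is a $u$-$S$-epimorphism precisely when $\text{Coker}(f_i)$ is $u$-$S$-torsion. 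Thus the whole lemma follows once I establish the elementary claim: a finite direct sum $\bigoplus_{i=1}^{n}C_i$ is $u$-$S$-torsion if and only if each $C_i$ is $u$-$S$-torsion.

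For the forward direction of this claim, suppose there is $s\in S$ with $s\big(\bigoplus_{i=1}^{n}C_i\big)=0$. Since each $C_i$ sits inside $\bigoplus_{i=1}^{n}C_i$ as a direct summand, we immediately get $sC_i=0$ for every $i$, so each $C_i$ is $u$-$S$-torsion. For the converse, I would choose for each $i$ an element $s_i\in S$ with $s_iC_i=0$ and set $s=s_1s_2\cdots s_n$. Because $S$ is a multiplicative subset we have $s\in S$, and since $s$ is a multiple of each $s_i$ it follows that $sC_i=0$ for all $i$; hence $s\big(\bigoplus_{i=1}^{n}C_i\big)=0$, and the direct sum is $u$-$S$-torsion.

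The argument is entirely routine, and I expect no genuine obstacle. The only place the hypotheses are actually used is in the converse, where the multiplicative closure of $S$ lets me replace the finitely many witnesses $s_i$ by the single witness $s=s_1\cdots s_n\in S$; this is precisely where the finiteness of the index set $\{1,\dots,n\}$ is essential, since it guarantees that the product again lies in $S$.
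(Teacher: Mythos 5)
Your proof is correct and follows essentially the same route as the paper: both rest on the identification $\operatorname{Im}\big(\bigoplus_i f_i\big)=\bigoplus_i\operatorname{Im}(f_i)$ (equivalently, the cokernel decomposes as the direct sum of cokernels) together with taking the product $s=s_1\cdots s_n$ in the converse direction. The only difference is cosmetic: the paper phrases the argument via the containment $sB_i\subseteq\operatorname{Im}(f_i)$ rather than explicitly through $u$-$S$-torsion cokernels.
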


\begin{proof}
  Let $f:=\bigoplus\limits_{i=1}^{n}f_i$. Suppose that $f$ is a $u$-$S$-epimorphism, then $s\bigoplus\limits_{i=1}^{n}B_i\subseteq \text{Im}(f)=\bigoplus\limits_{i=1}^{n}\text{Im}(f_i)$ for some $s\in S$. So $sB_i\subseteq \text{Im}(f_i)$ for each $i=1,\cdots,n$. Hence, each $f_i$ is a $u$-$S$-epimorphism. Conversely, suppose that each $f_i$ is a $u$-$S$-epimorphism. Then for each $i=1,\cdots,n$, there is $s_i\in S$ such that $s_iB_i\subseteq \text{Im}(f_i)$. Let $s=s_1\cdots s_n$. Then $sB_i\subseteq \text{Im}(f_i)$ for each $i=1,\cdots,n$. Hence $s\bigoplus\limits_{i=1}^{n}B_i=\bigoplus\limits_{i=1}^{n}sB_i\subseteq \bigoplus\limits_{i=1}^{n}\text{Im}(f_i)=\text{Im}(f)$. Thus $f$ is a $u$-$S$-epimorphism.
\end{proof}

\begin{thm}\label{thm3}
  Let $S$ be a multiplicative subset of a ring $R$ and $M,A_1,\cdots,A_n$ be $R$-modules. Then the following statements hold.
\begin{enumerate}
       \item[(1)] $\bigoplus\limits_{i=1}^{n}A_i$ is $u$-$S$-projective relative to $M$ if and only if each $A_i$ is $u$-$S$-projective relative to $M$.
       \item[(2)] $\bigoplus\limits_{i=1}^{n}A_i$ is $u$-$S$-injective relative to $M$ if and only if each $A_i$ is $u$-$S$-injective relative to $M$.
       \end{enumerate}
\end{thm}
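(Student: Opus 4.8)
The plan is to reduce both statements to Lemma \ref{lemm1} by means of the standard natural isomorphisms expressing that $\text{Hom}_R$ commutes with finite direct sums. For part $(1)$, I would fix a $u$-$S$-epimorphism $f:M\to N$ and invoke the canonical isomorphism $\text{Hom}_R(\bigoplus_{i=1}^{n}A_i, L)\cong \bigoplus_{i=1}^{n}\text{Hom}_R(A_i, L)$ (valid for a finite direct sum in the first variable), applied with $L=M$ and with $L=N$. The crucial point is that this isomorphism is natural in $L$, so that under it the induced map $\text{Hom}_R(\bigoplus_{i=1}^{n}A_i, f)$ is carried to the direct sum map $\bigoplus_{i=1}^{n}\text{Hom}_R(A_i, f)$; I would record this as a commutative square whose vertical arrows are the two isomorphisms.

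Once this identification is in place, both directions follow at once. For the forward implication: if $\bigoplus_{i=1}^{n}A_i$ is $u$-$S$-projective relative to $M$, then for each $u$-$S$-epimorphism $f$ the map $\text{Hom}_R(\bigoplus_{i=1}^{n}A_i, f)$, and hence $\bigoplus_{i=1}^{n}\text{Hom}_R(A_i, f)$, is a $u$-$S$-epimorphism, so Lemma \ref{lemm1} yields that each $\text{Hom}_R(A_i, f)$ is a $u$-$S$-epimorphism; since $f$ was arbitrary, each $A_i$ is $u$-$S$-projective relative to $M$. For the converse: if each $A_i$ is $u$-$S$-projective relative to $M$, then for a fixed $u$-$S$-epimorphism $f$ every $\text{Hom}_R(A_i, f)$ is a $u$-$S$-epimorphism, whence Lemma \ref{lemm1} gives that $\bigoplus_{i=1}^{n}\text{Hom}_R(A_i, f)\cong \text{Hom}_R(\bigoplus_{i=1}^{n}A_i, f)$ is a $u$-$S$-epimorphism. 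The quantifier over $f$ causes no difficulty, since Lemma \ref{lemm1} is applied separately for each fixed $f$.

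For part $(2)$ I would argue dually, now using that $\text{Hom}_R$ commutes with finite direct sums in the second variable: $\text{Hom}_R(L, \bigoplus_{i=1}^{n}A_i)\cong \bigoplus_{i=1}^{n}\text{Hom}_R(L, A_i)$, applied with $L=M$ and $L=K$ for a $u$-$S$-monomorphism $f:K\to M$. Naturality again identifies $\text{Hom}_R(f, \bigoplus_{i=1}^{n}A_i)$ with $\bigoplus_{i=1}^{n}\text{Hom}_R(f, A_i)$, and Lemma \ref{lemm1} finishes the argument exactly as in part $(1)$.

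The only genuinely delicate point is the first one: verifying that the canonical Hom--direct-sum isomorphisms intertwine $\text{Hom}_R(\bigoplus_{i=1}^{n}A_i, f)$ with $\bigoplus_{i=1}^{n}\text{Hom}_R(A_i, f)$ (respectively $\text{Hom}_R(f, \bigoplus_{i=1}^{n}A_i)$ with $\bigoplus_{i=1}^{n}\text{Hom}_R(f, A_i)$). This is the naturality of the isomorphism, and while routine it is precisely what makes Lemma \ref{lemm1} applicable; everything else is a formal unwinding of the definition of $u$-$S$-projective (injective) relative to $M$.
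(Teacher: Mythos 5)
Your proposal is correct and follows essentially the same route as the paper: the paper also fixes a $u$-$S$-epimorphism $f$, forms the commutative square whose vertical arrows are the natural Hom--direct-sum isomorphisms, and applies Lemma \ref{lemm1} to pass between $\text{Hom}_{R}\big(\bigoplus_{i=1}^{n}A_i,f\big)$ and $\bigoplus_{i=1}^{n}\text{Hom}_{R}(A_i,f)$, treating part (2) as the dual. No gaps.
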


\begin{proof}
We will prove only part (1), as the other part is a dual of it. Let $f:M\to N$ be a $u$-$S$-epimorphism. Then there are natural isomorphisms $\alpha$ and $\beta$ such that the following diagram 
  \[\xymatrix{
 \text{Hom}_{R}\big(\bigoplus\limits_{i=1}^{n}A_i,M\big)\ar[d]_{\alpha}\ar[r]^{ \theta } & \text{Hom}_{R}\big(\bigoplus\limits_{i=1}^{n}A_i,N\big)\ar[d]_{\beta}\\
\bigoplus\limits_{i=1}^{n}\text{Hom}_{R}\big(A_i,M\big)\ar[r]^{\lambda }  & \bigoplus\limits_{i=1}^{n}\text{Hom}_{R}\big(A_i,N\big)
}\] commutes, where $\theta=\text{Hom}_{R}\big(\bigoplus\limits_{i=1}^{n}A_i,f\big)$ and $\lambda=\bigoplus\limits_{i=1}^{n}\text{Hom}_{R}(A_i,f)$ \cite{AF}. Hence $\text{Hom}_{R}\big(\bigoplus\limits_{i=1}^{n}A_i,f\big)$ is a $u$-$S$-epimorphism if and only if $\bigoplus\limits_{i=1}^{n}\text{Hom}_{R}(A_i,f)$ is a $u$-$S$-epimorphism if and only if each $\text{Hom}_{R}(A_i,f)$ is a $u$-$S$-epimorphism  by Lemma \ref{lemm1}. Thus $(1)$ holds.  
\end{proof}

\begin{corollary}
      Let $S$ be a multiplicative subset of a ring $R$ and $A_1,\cdots,A_n$ be $R$-modules. Then the following statements hold.
      \begin{enumerate}
       \item[(1)] $\bigoplus\limits_{i=1}^{n}A_i$ is $u$-$S$-projective if and only if each $A_i$ is $u$-$S$-projective.
       \item[(2)] $\bigoplus\limits_{i=1}^{n}A_i$ is $u$-$S$-injective if and only if each $A_i$ is $u$-$S$-injective.
       \end{enumerate}
\end{corollary}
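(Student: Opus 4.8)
The plan is to deduce both statements directly by combining Remark~\ref{rem1} with Theorem~\ref{thm3}: together these two results convert the absolute notions ($u$-$S$-projective, $u$-$S$-injective) into their relative counterparts (relative to a module $M$) and then distribute the relative notion across a finite direct sum, so the corollary falls out as a chain of equivalences.

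For part (1), I would argue as follows. First, by Remark~\ref{rem1}(a), the module $\bigoplus_{i=1}^{n}A_i$ is $u$-$S$-projective if and only if it is $u$-$S$-projective relative to every $R$-module $M$. Next, fixing such an $M$, Theorem~\ref{thm3}(1) gives that $\bigoplus_{i=1}^{n}A_i$ is $u$-$S$-projective relative to $M$ if and only if each summand $A_i$ is $u$-$S$-projective relative to $M$. Since this equivalence holds for every $M$, we conclude that $\bigoplus_{i=1}^{n}A_i$ is $u$-$S$-projective relative to every $M$ if and only if each $A_i$ is $u$-$S$-projective relative to every $M$. Finally, applying Remark~\ref{rem1}(a) once more, now to each individual summand, the latter condition is equivalent to each $A_i$ being $u$-$S$-projective, which is exactly the desired statement.

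Part (2) is entirely parallel: I would replace Remark~\ref{rem1}(a) by Remark~\ref{rem1}(b) and Theorem~\ref{thm3}(1) by Theorem~\ref{thm3}(2), then run the same chain of equivalences with ``$u$-$S$-projective'' replaced by ``$u$-$S$-injective'' throughout, again quantifying over all $R$-modules $M$.

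There is essentially no obstacle here; the only point requiring a moment's care is the commutation of the two quantifiers ``for every module $M$'' and ``for each index $i$''. Because the index set is the fixed finite set $\{1,\dots,n\}$ and the equivalence in Theorem~\ref{thm3} is asserted for each individual $M$ separately, the two quantifiers commute freely, and the argument goes through without needing a single multiplier $s\in S$ to work uniformly across all choices of $M$. Thus the corollary is an immediate consequence of the two cited results.
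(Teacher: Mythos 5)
Your proposal is correct and is precisely the argument the paper intends: the corollary is stated without proof immediately after Theorem~\ref{thm3}, to be read as the combination of Remark~\ref{rem1} with Theorem~\ref{thm3}, exactly as you carry out. Your remark about the two universal quantifiers commuting is the only point needing care, and you handle it correctly.
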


\begin{proposition}\label{pr1}
     Let $S$ be a multiplicative subset of a ring $R$ and $M$ an $R$-module. Let $f:A\to B$ be a $u$-$S$-isomorphism. Then the following statements hold.
     \begin{enumerate}
         \item[(1)] $A$ is $u$-$S$-projective ($u$-$S$-injective) relative to $M$ if and only if $B$ is $u$-$S$-projective ($u$-$S$-injective) relative to $M$. 
         \item[(2)] $M$ is $u$-$S$-projective ($u$-$S$-injective) relative to $A$ if and only if $M$ is $u$-$S$-projective ($u$-$S$-injective) relative to $B$.
     \end{enumerate}
\end{proposition}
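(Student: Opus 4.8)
The plan is to first isolate how a $u$-$S$-isomorphism $f:A\to B$ transports through each variable of $\text{Hom}$, and then to run two short diagram arguments. The central auxiliary fact I would prove is: if $f:A\to B$ is a $u$-$S$-isomorphism, then for every $R$-module $N$ both
\[
\text{Hom}_R(f,N):\text{Hom}_R(B,N)\to\text{Hom}_R(A,N)\qquad\text{and}\qquad \text{Hom}_R(N,f):\text{Hom}_R(N,A)\to\text{Hom}_R(N,B)
\]
are $u$-$S$-isomorphisms. To see this I would invoke \cite[Lemma 2.1]{ZQ}, exactly as in the proof of Theorem \ref{thm2}, to obtain a $u$-$S$-isomorphism $h:B\to A$ and $t\in S$ with $hf=t1_A$ and $fh=t1_B$. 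Pre- or post-composing an arbitrary homomorphism with $h$ then shows, via these two relations, that each of the four kernels and cokernels of the induced maps is annihilated by $t$; for instance $\psi\mapsto\psi h$ sends $\psi\in\text{Hom}_R(A,N)$ to a preimage of $t\psi$ under $\text{Hom}_R(f,N)$, while $\phi f=0$ forces $t\phi=\phi(fh)=0$. Hence the induced maps are $u$-$S$-isomorphisms.

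Alongside this I would record the bookkeeping properties of $u$-$S$-epimorphisms used throughout: a composite of $u$-$S$-epimorphisms is a $u$-$S$-epimorphism; if $v\circ u$ is a $u$-$S$-epimorphism then so is $v$; and if $v\circ u$ is a $u$-$S$-epimorphism with one of $u,v$ a $u$-$S$-isomorphism, then the other factor is a $u$-$S$-epimorphism (the case where $v$ is the isomorphism uses that its kernel is $u$-$S$-torsion). All of these are one-line verifications with the defining element $s\in S$.

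For part (1) I would fix a $u$-$S$-epimorphism $g:M\to N$ (by Theorem \ref{thm2} an honest epimorphism suffices) and form the commutative square whose horizontal maps are $\text{Hom}_R(A,g)$ and $\text{Hom}_R(B,g)$ and whose vertical maps are the $u$-$S$-isomorphisms $\text{Hom}_R(f,M)$ and $\text{Hom}_R(f,N)$ from the auxiliary fact. Commutativity identifies the two diagonal composites; since the verticals are $u$-$S$-isomorphisms, the cancellation properties force the top horizontal to be a $u$-$S$-epimorphism exactly when the bottom one is, which is precisely the asserted equivalence between $A$ and $B$ being $u$-$S$-projective relative to $M$. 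The $u$-$S$-injective statement is the formal dual: replace $g$ by a monomorphism $g:K\to M$ (Theorem \ref{thm1}), take horizontal maps $\text{Hom}_R(g,A)$, $\text{Hom}_R(g,B)$ and vertical maps the covariant $u$-$S$-isomorphisms $\text{Hom}_R(M,f)$, $\text{Hom}_R(K,f)$.

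For part (2) the module $M$ stays fixed while $A$ and $B$ vary, so rather than a square I would use that $f$ (and its pseudo-inverse $h$) convert $u$-$S$-epimorphisms out of one module into $u$-$S$-epimorphisms out of the other. If $M$ is $u$-$S$-projective relative to $A$ and $g':B\to N'$ is a $u$-$S$-epimorphism, then $g'f:A\to N'$ is a $u$-$S$-epimorphism, so $\text{Hom}_R(M,g'f)=\text{Hom}_R(M,g')\circ\text{Hom}_R(M,f)$ is a $u$-$S$-epimorphism; as $\text{Hom}_R(M,f)$ is a $u$-$S$-isomorphism, cancellation yields that $\text{Hom}_R(M,g')$ is a $u$-$S$-epimorphism, i.e. $M$ is $u$-$S$-projective relative to $B$. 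The reverse implication is the same computation with $h$ in place of $f$, and the two $u$-$S$-injective implications run identically using monomorphisms into $A$ (resp. $B$) and the contravariant $\text{Hom}$. The only genuine obstacle in the whole proof is the auxiliary fact of the first paragraph --- in particular controlling the cokernels of the induced $\text{Hom}$ maps --- and this is exactly what the two-sided pseudo-inverse $h$ from \cite[Lemma 2.1]{ZQ} is there to supply; everything afterward is formal manipulation of $u$-$S$-epimorphisms.
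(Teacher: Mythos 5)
Your proof is correct and follows essentially the same route as the paper's: both arguments hinge on the two-sided pseudo-inverse $f'$ with $ff'=t1_B$, $f'f=t1_A$ supplied by \cite[Lemma 2.1]{ZQ}, and then transfer $u$-$S$-epimorphisms by pre- and post-composing with $f$ and $f'$. The only difference is presentational --- you package the composition argument as an auxiliary fact about $\operatorname{Hom}_R(f,N)$ and $\operatorname{Hom}_R(N,f)$ being $u$-$S$-isomorphisms plus cancellation lemmas, where the paper carries out the same element-level computation inline.
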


\begin{proof}
We will prove only the case of relative $u$-$S$-projectivity. First, since  $f:A\to B$ is a $u$-$S$-isomorphism, then by \cite[Lemma 2.1]{ZQ}, there is a $u$-$S$-isomorphism $f':B\to A$ and $t\in S$ such that $ff'=t1_B$ and $f'f=t1_A$. \\[0.1cm]
(1) Let $g:M\to N$ be a $u$-$S$-epimorphism. Since $A$ is $u$-$S$-projective relative to $M$, then the map $$\text{Hom}_{R}(A,g): \text{Hom}_R(A,M)\to \text{Hom}_R(A,N)$$ is a $u$-$S$-epimorphism. So $s\text{Hom}_R(A,N)\subseteq \text{Im}\big(\text{Hom}_{R}(A,g)\big)$ for some $s\in S$. Take $h\in \text{Hom}_{R}(B,N)$. Then $hf\in \text{Hom}_{R}(A,N)$, so $shf=gg'$ for some $g'\in \text{Hom}_R(A,M)$. So $sth=sth1_B=shff'=gg'f'\in \text{Im}\big(\text{Hom}_{R}(B,g)\big)$ (since $g'f'\in \text{Hom}_R(B,M)$). Hence, the map $$\text{Hom}_{R}(B,g): \text{Hom}_R(B,M)\to \text{Hom}_R(B,N)$$ is a $u$-$S$-epimorphism. Thus $B$ is $u$-$S$-projective relative to $M$. The other direction is similar. \\[0.1cm]
(2) Let $g:B\to C$ be a $u$-$S$-epimorphism. Since $f:A\to B$ is a $u$-$S$-epimorphism, $gf:A\to C$ is a $u$-$S$-epimorphism. But $M$ is $u$-$S$-projective relative to $A$, so $(gf)_*=\text{Hom}_{R}(M,gf)=\text{Hom}_{R}(M,g)\text{Hom}_{R}(M,f)=g_*f_*$ is a $u$-$S$-epimorphism. So $s\text{Hom}_{R}(M,C)\subseteq \text{Im}(g_*f_*)\subseteq \text{Im}(g_*)$ for some $s\in S$. Thus $g_*$ is a $u$-$S$-epimorphism. Therefore, $M$ is $u$-$S$-projective relative to $B$. For the converse, if $h:A\to C$ is a $u$-$S$-epimorphism, then $hf':B\to C$ is a $u$-$S$-epimorphism. So $(hf')_*=\text{Hom}_{R}(M,hf')=\text{Hom}_{R}(M,h)\text{Hom}_{R}(M,f')=h_*f'_*$ is a $u$-$S$-epimorphism and hence $h_*$ is a $u$-$S$-epimorphism. Thus $M$ is $u$-$S$-projective relative to $A$.
\end{proof}

\begin{lemma}\label{lem1}
    Let $S$ be a multiplicative subset of a ring $R$. Consider the following commutative diagram with exact rows:
   \[\xymatrix{
A\ar[d]_{\alpha} \ar[r]^{f} & B\ar[d]_{\beta} \ar[r]^{g} & C\ar[d]_{\gamma}\\
D\ar[r]^{h}  & E \ar[r]^{k}& F
}\] 

\begin{enumerate}
    \item[(1)] If $\beta$ is a $u$-$S$-epimorphism and $h,\gamma$ are monomorphisms, then $\alpha$ is a $u$-$S$-epimorphism.
    \item[(2)] If $\alpha,\gamma,g$ are $u$-$S$-epimorphisms, then $\beta$ is a $u$-$S$-epimorphism.
    \item[(3)] If $0\to A\xrightarrow{f} B\xrightarrow{g} C\to 0$ is a $u$-$S$-split exact sequence, then $\text{Hom}_R(M,g)$ is a $u$-$S$-epimorphism for any $R$-module $M$. 
\end{enumerate}
\end{lemma}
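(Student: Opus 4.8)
For (1) and (2) the plan is to run the two versions of the four lemma as ordinary element-wise diagram chases, and for (3) to manufacture a right $u$-$S$-splitting of $g$ out of the given left splitting. The one structural point to keep in mind throughout is that every $u$-$S$-epimorphism hypothesis supplies a \emph{single} $s\in S$ that works for all elements of the relevant module simultaneously; since each chase combines only finitely many such hypotheses, the product of the chosen elements is again in $S$ and serves as a uniform witness for the conclusion. I will read ``exact rows'' as genuine exactness, so $\text{Im}(f)=\text{Ker}(g)$ and $\text{Im}(h)=\text{Ker}(k)$, and indicate at the end how the fully $u$-$S$-exact case is absorbed.

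For (1), I would fix $s_1\in S$ with $s_1E\subseteq\text{Im}(\beta)$. Given $d\in D$, choose $b\in B$ with $\beta(b)=s_1h(d)$. Applying $k$ and using $k\beta=\gamma g$ together with $kh=0$ gives $\gamma(g(b))=s_1k(h(d))=0$, so $g(b)=0$ because $\gamma$ is a monomorphism; by exactness of the top row $b=f(a)$ for some $a\in A$. Then $h(\alpha(a))=\beta(f(a))=s_1h(d)=h(s_1d)$, and since $h$ is a monomorphism, $\alpha(a)=s_1d$. As $s_1$ is independent of $d$, this shows $s_1D\subseteq\text{Im}(\alpha)$, i.e.\ $\alpha$ is a $u$-$S$-epimorphism.

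For (2), I would fix $s_1,s_2,s_3\in S$ with $s_1F\subseteq\text{Im}(\gamma)$, $s_2C\subseteq\text{Im}(g)$ and $s_3D\subseteq\text{Im}(\alpha)$. Given $e\in E$, write $s_1k(e)=\gamma(c)$ and then $s_2c=g(b)$, and chase the ``error term'' $s_1s_2e-\beta(b)$: applying $k$ and using $k\beta=\gamma g$ with the two displayed identities gives $k(s_1s_2e-\beta(b))=0$, so by exactness of the bottom row $s_1s_2e-\beta(b)=h(d)$ for some $d\in D$. Writing $s_3d=\alpha(a)$ and using $h\alpha=\beta f$ yields $s_3(s_1s_2e-\beta(b))=\beta(f(a))$, whence $s_1s_2s_3\,e=\beta(s_3b+f(a))\in\text{Im}(\beta)$. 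Thus $s_1s_2s_3\in S$ witnesses that $\beta$ is a $u$-$S$-epimorphism.

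For (3), the content is to turn the left splitting $f'f=s1_A$ (with $f':B\to A$, $s\in S$) into a right splitting of $g$ up to $S$. I would set $\rho=s1_B-ff':B\to B$; a direct check gives $\rho f=sf-f(f'f)=0$ and $g\rho=sg$ (using $gf=0$), so $\text{Ker}(g)=\text{Im}(f)\subseteq\text{Ker}(\rho)$ and $\rho$ factors as $\rho=\sigma g$ for a unique $\sigma:C\to B$; then $g\sigma g=sg$ forces $g\sigma=s1_C$ since $g$ is an epimorphism. Finally, for any $R$-module $M$ and any $\phi\in\text{Hom}_R(M,C)$, the map $\psi=\sigma\phi$ satisfies $g\psi=s\phi$, so $s\,\text{Hom}_R(M,C)\subseteq\text{Im}(\text{Hom}_R(M,g))$ and $\text{Hom}_R(M,g)$ is a $u$-$S$-epimorphism. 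The chases in (1) and (2) are routine once the $S$-bookkeeping is tracked; the main obstacle is (3), where the hypothesis provides a splitting on the wrong side and the real work is the factorization $\rho=\sigma g$ that produces $g\sigma=s1_C$. If the sequence is only $u$-$S$-exact rather than exact, this factorization acquires extra factors from $S$ (coming from $s'\text{Ker}(g)\subseteq\text{Im}(f)$ and from $g$ being merely $u$-$S$-epi), which I would absorb into the final witness exactly as in the chases above.
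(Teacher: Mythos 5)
Your proof is correct and follows essentially the same route as the paper: part (1) is the identical diagram chase, part (2) is the four-lemma chase that the paper delegates to \cite[Lemma 2.11]{QK}, and in part (3) your construction of $\sigma$ with $g\sigma=s1_C$ from $f'f=s1_A$ is precisely the content of \cite[Lemma 2.4]{ZQ}, which the paper simply cites before concluding as you do.
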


\begin{proof}
(1) Since $\beta$ is a $u$-$S$-epimorphism, so $sE\subseteq \text{Im}(\beta)$ for some $s\in S$. Let $d\in D$. Then $h(d)\in E$ implies $sh(d)=\beta(b)$ for some $b\in B$. So $k\beta (b)=skh(d)=0$ but then $\gamma g(b)=0$. But $\gamma$ is a monomorphism, so $g(b)=0$ and this implies $b\in \text{Ker}(g)=\text{Im}(f)$. So $b=f(a)$ for some $a\in A$. Hence $sh(d)=\beta(b)=\beta (f(a))=\beta f(a)=h\alpha(a)$. Since $h$ is a monomorphism, $sd=\alpha(a)$. Hence $sD\subseteq \text{Im}(\alpha)$. Thus $\alpha$ is a $u$-$S$-epimorphism. \\[0.1cm]
(2) Similar to the proof of \cite[Lemma 2.11]{QK}.\\[0.1cm]
(3) By \cite[Lemma 2.4]{ZQ}, there is $s\in S$ and $g':C\to B$ such that $gg'=s1_C$. For any $h\in \text{Hom}_R(M,C)$, we have $sh=s1_Ch=gg'h\in \text{Im}\big(\text{Hom}_R(M,g)\big)$. Thus $\text{Hom}_R(M,g)$ is a $u$-$S$-epimorphism. 
\end{proof}

\begin{thm}\label{thm4} 
     Let $S$ be a multiplicative subset of a ring $R$ and $M$ an $R$-module. Then the following statements hold.
    \begin{enumerate}
        \item[(1)] Let $A$ be a submodule of $B$. If $M$ is $u$-$S$-projective relative to $B$, then $M$ is $u$-$S$-projective relative to both $A$ and $\frac{B}{A}$.
        \item[(2)] $M$ is $u$-$S$-projective relative to $\bigoplus\limits_{i=1}^{n}A_i$ if and only if $M$ is $u$-$S$-projective relative to each $A_i$.
    \end{enumerate}
\end{thm}

\begin{proof} (1) Since $\eta_{A}:B\to \frac{B}{A}$ is an epimorphism and $M$ is $u$-$S$-projective relative to $B$, then by the first part of the proof of Proposition \ref{pr1} (2), we have $M$ is $u$-$S$-projective relative to $\frac{B}{A}$. Now, let $K\leq A$ and consider the commutative diagram 

     \[\xymatrix{
      & & & 0\ar[d]\\
0\ar[r] & A\ar[d]_{\alpha}\ar[r] & B\ar[d]_{\beta} \ar[r]&\frac{B}{A}\ar[d]_{\gamma}\ar[r]&0\\
0\ar[r]&\frac{A}{K}\ar[d]\ar[r]^{\sigma}  & \frac{B}{K}\ar[d] \ar[r]&\frac{B}{A}\ar[d]\ar[r]&0\\
& 0  &0  &0
}\] 

 with exact rows and columns, where $\alpha,\beta$ are the natural maps and $\gamma=1_{\frac{B}{A}}$ is the identity map. Since $\text{Hom}_{R}(M,-)$ is left exact, we have the following commutative diagram 

    \[\xymatrix{
      & & & \\
0\ar[r] & \text{Hom}_{R}(M,A)\ar[d]_{\alpha_*}\ar[r] & \text{Hom}_{R}(M,B)\ar[d]_{\beta_*} \ar[r]&\text{Hom}_{R}(M,\frac{B}{A})\ar[d]_{\gamma_*}\\
0\ar[r]&\text{Hom}_{R}(M,\frac{A}{K})\ar[r]^{\sigma_*}  & \text{Hom}_{R}(M,\frac{B}{K}) \ar[r]& \text{Hom}_{R}(M,\frac{B}{A})\\
 & &  &
}\] 
 with exact rows. Since $M$ is $u$-$S$-projective relative to $B$, $\beta_*$ is a $u$-$S$-epimorphism. But $\sigma_*$ and $\gamma_*$ are monomorphisms, so by Lemma \ref{lem1} (1), $\alpha_*$ is a $u$-$S$-epimorphism. Thus, by Theorem \ref{thm1}, $M$ is $u$-$S$-projective relative to $A$. \\[0.1cm]
(2) We will prove this part only for $n=2$. Suppose that $M$ is $u$-$S$-projective relative to $A \oplus B$. Since $\frac{A\oplus B}{A}\cong B$, then by part (1) and Proposition \ref{pr1} (2), $M$ is $u$-$S$-projective relative to both $A$ and $B$. Conversely, suppose that $M$ is $u$-$S$-projective relative to both $A$ and $B$. Let $K\leq A\oplus B$. Then there is an obvious commutative diagram 
           \[\xymatrix{
     0\ar[r] & A\ar[d]\ar[r] & A\oplus B\ar[d]_{\eta_K} \ar[r]^{\pi}&B\ar[d]\ar[r]&0\\
0\ar[r]&\frac{A+K}{K}\ar[d]\ar[r]  & \frac{A\oplus B}{K}\ar[d] \ar[r]&\frac{A\oplus B}{A+K}\ar[d]\ar[r]&0\\
& 0  &0  &0
}\] with exact rows and columns \cite{AF}, where $\pi$ is the natural projection. Applying $\text{Hom}_{R}(M,-)$ to the above diagram, we obtain the following commutative diagram 

\[\xymatrix{
     0\ar[r] & \text{Hom}_{R}(M,A)\ar[d]\ar[r] & \text{Hom}_{R}(M,A\oplus B)\ar[d]_{(\eta_K)_*} \ar[r]^{\pi_*}&\text{Hom}_{R}(M,B)\ar[d]\\
0\ar[r]&\text{Hom}_{R}(M,\frac{A+K}{K})\ar[d]\ar[r]  & \text{Hom}_{R}(M,\frac{A\oplus B}{K}) \ar[r]&\text{Hom}_{R}(M,\frac{A\oplus B}{A+K})\ar[d]\\
& 0  &  &0
}\] 
with exact rows. Since $M$ is $u$-$S$-projective relative to both $A$ and $B$, then the first and the third columns of the last diagram are $u$-$S$-exact. But the sequence $0 \to A\to A\oplus B\to B\to 0$ splits, so $\pi_*$ is a $u$-$S$-epimorphism by Lemma \ref{lem1} (3). Hence $(\eta_K)_*$ is a $u$-$S$-epimorphism by Lemma \ref{lem1} (2). Therefore, by Theorem \ref{thm1}, $M$ is $u$-$S$-projective relative to $A\oplus B$.
\end{proof}

\begin{thm}\label{thm5}
    Let $S$ be a multiplicative subset of a ring $R$ and $M$ an $R$-module. Then the following statements hold.
    \begin{enumerate}
        \item[(1)] Let $A$ be a submodule of $B$. If $M$ is $u$-$S$-injective relative to $B$, then $M$ is $u$-$S$-injective relative to both $A$ and $\frac{B}{A}$.
        \item[(2)] $M$ is $u$-$S$-injective relative to $\bigoplus\limits_{i=1}^{n}A_i$ if and only if $M$ is $u$-$S$-injective relative to each $A_i$.
    \end{enumerate}
\end{thm}

\begin{proof}
    Similar to the proof of Theorem \ref{thm4}.
\end{proof}

\begin{corollary}\label{cor1}
    Let $S$ be a multiplicative subset of a ring $R$ and let $0\to A\xrightarrow{f} B\xrightarrow{g} C\to 0$ be a $u$-$S$-exact sequence. If $M$ is $u$-$S$-projective ($u$-$S$-injective) relative to $B$, then $M$ is $u$-$S$-projective ($u$-$S$-injective) relative to both $A$ and $C$.
\end{corollary}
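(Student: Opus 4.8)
The plan is to deduce this from Theorem \ref{thm4} (and, dually, Theorem \ref{thm5}), which already settle the case of a genuine short exact sequence $0\to A\to B\to B/A\to 0$. The idea is to replace $A$ and $C$ by a genuine submodule and a genuine quotient of $B$ that are $u$-$S$-isomorphic to them, and then transport relative $u$-$S$-projectivity across these $u$-$S$-isomorphisms using Proposition \ref{pr1}(2). I treat only the projective case; the injective case is dual, using Theorem \ref{thm5} and the injective half of Proposition \ref{pr1} in place of the projective ones.

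For the assertion about $A$: the corestriction $\bar f:A\to \text{Im}(f)$ of $f$ is surjective, so $\text{Coker}(\bar f)=0$, while $\text{Ker}(\bar f)=\text{Ker}(f)$ is $u$-$S$-torsion because the sequence is $u$-$S$-exact at $A$ (that is, $f$ is a $u$-$S$-monomorphism). Hence $\bar f$ is a $u$-$S$-isomorphism. Now $\text{Im}(f)$ is an honest submodule of $B$, so Theorem \ref{thm4}(1) gives that $M$ is $u$-$S$-projective relative to $\text{Im}(f)$; applying Proposition \ref{pr1}(2) to $\bar f$, it follows that $M$ is $u$-$S$-projective relative to $A$.

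For the assertion about $C$: set $K=\text{Ker}(g)$, an honest submodule of $B$, and let $\tilde g:B/K\to C$ be the map induced by $g$. It is injective by the first isomorphism theorem, and $\text{Coker}(\tilde g)=C/\text{Im}(g)=\text{Coker}(g)$ is $u$-$S$-torsion because the sequence is $u$-$S$-exact at $C$ (that is, $g$ is a $u$-$S$-epimorphism); thus $\tilde g$ is a $u$-$S$-isomorphism. By Theorem \ref{thm4}(1), $M$ is $u$-$S$-projective relative to $B/K$, and Proposition \ref{pr1}(2) applied to $\tilde g$ then yields that $M$ is $u$-$S$-projective relative to $C$.

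No step is really an obstacle here: the whole content is the observation that $u$-$S$-exactness at $A$ and at $C$ is precisely what makes the canonical maps $\bar f$ and $\tilde g$ into $u$-$S$-isomorphisms, after which Theorem \ref{thm4} (resp. Theorem \ref{thm5}) and Proposition \ref{pr1} do all the work. The only point worth double-checking is that the direction of each $u$-$S$-isomorphism does not matter, which is fine since Proposition \ref{pr1}(2) is stated as an equivalence in its two modules.
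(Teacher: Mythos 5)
Your proposal is correct and follows essentially the same route as the paper: identify $A$ with the honest submodule $\mathrm{Im}(f)$ and $C$ with the honest quotient $B/\mathrm{Ker}(g)$ via $u$-$S$-isomorphisms, apply Theorem \ref{thm4} (resp.\ \ref{thm5}), and transport the conclusion back with Proposition \ref{pr1}(2). You merely spell out the verification that $\bar f$ and $\tilde g$ are $u$-$S$-isomorphisms, which the paper leaves implicit.
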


\begin{proof}
   Let $0\to A\xrightarrow{f} B\xrightarrow{g} C\to 0$ be a $u$-$S$-exact sequence. Then $A$ is $u$-$S$-isomorphic to $\text{Im}(f)$ and $C$ is $u$-$S$-isomorphic to $\frac{B}{\text{Ker}(g)}$. Since $M$ is $u$-$S$-projective ($u$-$S$-injective) relative to $B$, then by Theorems \ref{thm4} and \ref{thm5}, $M$ is $u$-$S$-projective ($u$-$S$-injective) relative to both $\text{Im}(f)$ and $\frac{B}{\text{Ker}(g)}$. Hence, by Proposition \ref{pr1} (2), $M$ is $u$-$S$-projective ($u$-$S$-injective) relative to both $A$ and $C$.
\end{proof}

\begin{proposition}\label{pr2}
 Let $S$ be a multiplicative subset of a ring $R$ and let $0\to A\to B\to C\to 0$ be a $u$-$S$-split exact sequence. Then $M$ is $u$-$S$-projective ($u$-$S$-injective) relative to $B$ if and only if $M$ is $u$-$S$-projective ($u$-$S$-injective) relative to both $A$ and $C$.   
\end{proposition}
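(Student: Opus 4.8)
The plan is to reduce the statement to the direct-sum results already proved, by showing that a $u$-$S$-split short exact sequence $0\to A\xrightarrow{f} B\xrightarrow{g} C\to 0$ forces $B$ to be $u$-$S$-isomorphic to $A\oplus C$. Once this reduction is in place, both equivalences follow formally from Theorems \ref{thm4}(2) and \ref{thm5}(2) together with Proposition \ref{pr1}(2).

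First I would assemble the data attached to the sequence. Being $u$-$S$-split gives $s_1\in S$ and $f':B\to A$ with $f'f=s_1 1_A$; by \cite[Lemma 2.4]{ZQ} there are $s_3\in S$ and $g':C\to B$ with $gg'=s_3 1_C$; and $u$-$S$-exactness supplies $s_2\in S$ with $s_2\,\text{Ker}(g)\subseteq\text{Im}(f)$ and $s_2\,\text{Im}(f)\subseteq\text{Ker}(g)$, so in particular $s_2\, gf=0$. I then define $\psi:A\oplus C\to B$ by $\psi(a,c)=f(a)+g'(c)$ and claim it is a $u$-$S$-isomorphism. For the cokernel, for $b\in B$ one has $g(s_3 b-g'(g(b)))=0$, hence $s_2 s_3 b\in\text{Im}(f)+\text{Im}(g')=\text{Im}(\psi)$, so $s_2 s_3 B\subseteq\text{Im}(\psi)$ and $\text{Coker}(\psi)$ is $u$-$S$-torsion. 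For the kernel, if $f(a)+g'(c)=0$ then applying $g$ and using $s_2 gf=0$ and $gg'=s_3 1_C$ yields $s_2 s_3 c=0$, while applying $f'$ and using $f'f=s_1 1_A$ yields $s_1 a=-f'g'(c)$; multiplying the latter by $s_2 s_3$ gives $s_1 s_2 s_3 a=-f'g'(s_2 s_3 c)=0$, so $s_1 s_2 s_3$ annihilates $\text{Ker}(\psi)$, which is therefore $u$-$S$-torsion.

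With $\psi:A\oplus C\to B$ a $u$-$S$-isomorphism, the rest is bookkeeping. For the projective case, $M$ is $u$-$S$-projective relative to $B$ iff, by Proposition \ref{pr1}(2) applied to $\psi$, $M$ is $u$-$S$-projective relative to $A\oplus C$, iff, by Theorem \ref{thm4}(2), $M$ is $u$-$S$-projective relative to both $A$ and $C$. The injective case is the same word for word, with Theorem \ref{thm5}(2) replacing Theorem \ref{thm4}(2). The forward implication by itself is already contained in Corollary \ref{cor1}, since a $u$-$S$-split sequence is $u$-$S$-exact; but the $u$-$S$-isomorphism delivers both directions simultaneously.

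The main obstacle I expect is the kernel/cokernel estimate for $\psi$: unlike an honestly split sequence, here the cross terms $gf$ and $f'g'$ vanish only after multiplication by elements of $S$. The care required is to track a single element of $S$, namely $s_1 s_2 s_3$, that simultaneously kills both coordinates of any element of $\text{Ker}(\psi)$, and to invoke the containment $s_2\,\text{Ker}(g)\subseteq\text{Im}(f)$ at the right moment when bounding the cokernel; the remaining verifications are routine.
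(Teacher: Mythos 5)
Your proof is correct and follows the same route as the paper: reduce to the $u$-$S$-isomorphism $B\cong A\oplus C$ and then invoke Proposition \ref{pr1}(2) together with Theorems \ref{thm4}(2)/\ref{thm5}(2). The only difference is that the paper simply cites \cite[Lemma 2.8]{KMOZ} for that $u$-$S$-isomorphism, whereas you prove it directly via the map $\psi(a,c)=f(a)+g'(c)$; your kernel and cokernel estimates for $\psi$ are accurate.
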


\begin{proof}
    Since the exact sequence $0\to A\to B\to C\to 0$ $u$-$S$-splits, then by \cite[Lemma 2.8]{KMOZ}, $B$ is $u$-$S$-isomorphic to $A \oplus C$. So if $M$ is $u$-$S$-projective ($u$-$S$-injective) relative to both $A$ and $C$, then by Theorems \ref{thm4} and \ref{thm5}, $M$ is $u$-$S$-projective ($u$-$S$-injective) relative to $A\oplus C$ and hence by Proposition \ref{pr1} (2), $M$ is $u$-$S$-projective ($u$-$S$-injective) relative to $B$. The other direction follows from Corollary \ref{cor1}.
\end{proof}

\begin{lemma}\label{lem2}
    Let $S$ be a multiplicative subset of a ring $R$. A $u$-$S$-exact sequence $0\to A\xrightarrow{f} B\xrightarrow{g} C\to 0$ $u$-$S$-splits if either $A$ is $u$-$S$-injective relative to $B$ or $C$ is $u$-$S$-projective relative to $B$.
\end{lemma}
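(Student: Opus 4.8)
The plan is to dispatch the two sufficient conditions independently, in each case feeding the appropriate structure map of the given $u$-$S$-exact sequence into the relevant relative property and then reading a $u$-$S$-splitting off the resulting $u$-$S$-epimorphism. The preliminary observation I would make is that, since $0\to A\xrightarrow{f} B\xrightarrow{g} C\to 0$ is $u$-$S$-exact, $f$ is a $u$-$S$-monomorphism and $g$ is a $u$-$S$-epimorphism; these are exactly the maps needed to activate the injective and projective hypotheses respectively.

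First I would treat the case where $A$ is $u$-$S$-injective relative to $B$. Applying Definition \ref{def1}(ii) to the $u$-$S$-monomorphism $f:A\to B$ shows that $\text{Hom}_R(f,A):\text{Hom}_R(B,A)\to \text{Hom}_R(A,A)$ is a $u$-$S$-epimorphism, so there is $s\in S$ with $s\,\text{Hom}_R(A,A)\subseteq \text{Im}\big(\text{Hom}_R(f,A)\big)$. In particular $s1_A=f'f$ for some $f'\in \text{Hom}_R(B,A)$, which is precisely the defining condition (item (iv) of the introduction) for the sequence to $u$-$S$-split with respect to $s$.

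For the case where $C$ is $u$-$S$-projective relative to $B$, I would instead apply Definition \ref{def1}(i) to the $u$-$S$-epimorphism $g:B\to C$, obtaining that $\text{Hom}_R(C,g):\text{Hom}_R(C,B)\to \text{Hom}_R(C,C)$ is a $u$-$S$-epimorphism; hence $s1_C=gg'$ for some $s\in S$ and $g'\in \text{Hom}_R(C,B)$, so $g$ admits a $u$-$S$-right inverse. The only real bookkeeping here is that the definition of $u$-$S$-split is phrased via a left inverse $f'$ of $f$, whereas projectivity naturally yields a right inverse $g'$ of $g$; the hard part, such as it is, is simply to invoke \cite[Lemma 2.4]{ZQ} to pass between these two equivalent formulations, after which the conclusion is immediate. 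Beyond this translation neither case presents a genuine obstacle, so I expect the whole argument to be short.
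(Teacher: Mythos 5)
Your proposal is correct and follows essentially the same route as the paper: apply the relative injectivity of $A$ to the $u$-$S$-monomorphism $f$ to obtain $f'f=s1_A$, and the relative projectivity of $C$ to the $u$-$S$-epimorphism $g$ to obtain $gg'=s1_C$, then invoke \cite[Lemma 2.4]{ZQ} to convert the right inverse of $g$ into the splitting condition. No gaps.
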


\begin{proof}
   Let $0\to A\xrightarrow{f} B\xrightarrow{g} C\to 0$ be a $u$-$S$-exact sequence. Suppose that $A$ is $u$-$S$-injective relative to $B$. Then the map $$f^*: \text{Hom}_R(B,A)\to \text{Hom}_R(A,A)$$ is a $u$-$S$-epimorphism. So $s\text{Hom}_R(A,A)\subseteq \text{Im}(f^*)$ for some $s\in S$. So $s1_A\in \text{Im}(f^*)$ and hence $s1_A=f^*(f')=f'f$ for some $f'\in \text{Hom}_R(B,A)$. Thus $0\to A\xrightarrow{f} B\xrightarrow{g} C\to 0$ $u$-$S$-splits. Next, suppose that $C$ is $u$-$S$-projective relative to $B$. Then the map $$g_*: \text{Hom}_R(C,B)\to \text{Hom}_R(C,C)$$ is a $u$-$S$-epimorphism. So $t\text{Hom}_R(C,C)\subseteq \text{Im}(g_*)$ for some $t\in S$. So $t1_C\in \text{Im}(g_*)$ and hence $t1_C=g_*(g')=gg'$ for some $g\in \text{Hom}_R(C,B)$. Thus by \cite[Lemma 2.4]{ZQ}, $0\to A\xrightarrow{f} B\xrightarrow{g} C\to 0$ $u$-$S$-splits.
\end{proof}

The next two results give several characterizations of $u$-$S$-semisimple modules.
\begin{thm}\label{thm6}
  Let $R$ be a ring and $S$ a multiplicative subset of $R$. The following statements about an $R$-module $M$ are equivalent:
\begin{enumerate}
    \item[(1)] $M$ is $u$-$S$-semisimple; 
    \item[(2)] Every $R$-module is $u$-$S$-injective relative to $M$;
    \item[(3)] Every $R$-module is $u$-$S$-projective relative to $M$.
\end{enumerate}
\end{thm}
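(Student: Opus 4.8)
The plan is to derive the two substantive equivalences from Lemma \ref{lem2} together with the characterizations of relative $u$-$S$-injectivity and projectivity furnished by Theorems \ref{thm1} and \ref{thm2}. It is cleanest to prove the cycle as four implications: the two ``easy'' directions $(2)\Rightarrow(1)$ and $(3)\Rightarrow(1)$, which are direct applications of Lemma \ref{lem2}, and the two constructive directions $(1)\Rightarrow(2)$ and $(1)\Rightarrow(3)$, where the splitting of a canonical sequence supplies the required map.

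For $(2)\Rightarrow(1)$ I would take an arbitrary $u$-$S$-short exact sequence $0\to A\to M\to C\to 0$. By hypothesis $A$ is $u$-$S$-injective relative to $M$, so Lemma \ref{lem2} (with $B=M$) immediately yields that the sequence $u$-$S$-splits; hence $M$ is $u$-$S$-semisimple. The implication $(3)\Rightarrow(1)$ is symmetric: given such a sequence, $C$ is $u$-$S$-projective relative to $M$ by hypothesis, and the other half of Lemma \ref{lem2} forces the sequence to $u$-$S$-split.

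For $(1)\Rightarrow(2)$ I would fix an arbitrary $R$-module $E$; by Theorem \ref{thm1} it suffices to check that $(i_K)^{*}$ is a $u$-$S$-epimorphism for every $K\leq M$. The key observation is that the genuine short exact sequence $0\to K\xrightarrow{i_K} M\xrightarrow{\eta_K} \frac{M}{K}\to 0$ is in particular $u$-$S$-exact with $M$ in the middle term, so $u$-$S$-semisimplicity gives $s\in S$ and $f'\colon M\to K$ with $f'i_K=s1_K$. Then for any $\alpha\in \text{Hom}_R(K,E)$ the map $\alpha f'\colon M\to E$ satisfies $(i_K)^{*}(\alpha f')=\alpha f' i_K=s\alpha$, so $s\,\text{Hom}_R(K,E)\subseteq \text{Im}\big((i_K)^{*}\big)$ and $(i_K)^{*}$ is a $u$-$S$-epimorphism. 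Dually, for $(1)\Rightarrow(3)$ I would fix an $R$-module $P$ and, by Theorem \ref{thm2}, reduce to showing $(\eta_K)_{*}$ is a $u$-$S$-epimorphism for every $K\leq M$; here I would use the section form of the splitting, namely \cite[Lemma 2.4]{ZQ} applied to the same $u$-$S$-split sequence, to obtain $s\in S$ and $g'\colon \frac{M}{K}\to M$ with $\eta_K g'=s1_{\frac{M}{K}}$, whence $(\eta_K)_{*}(g'\alpha)=s\alpha$ for every $\alpha\in \text{Hom}_R(P,\frac{M}{K})$.

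The proof is essentially an assembly of earlier results, so I do not anticipate a serious obstacle; the only points requiring care are to invoke the correct form of the $u$-$S$-splitting in each direction (a retraction $f'i_K=s1_K$ for the injective side, a section $\eta_K g'=s1_{M/K}$ for the projective side, the two being interchangeable via \cite[Lemma 2.4]{ZQ}), and to note that the uniform scalar $s$ is produced by the splitting of the fixed sequence $0\to K\to M\to \frac{M}{K}\to 0$ and therefore depends only on $K$, not on the test homomorphism $\alpha$, so a single $s$ annihilates the whole cokernel as required for $u$-$S$-epimorphism.
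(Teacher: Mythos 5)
Your proposal is correct and follows essentially the same route as the paper: the implications $(2)\Rightarrow(1)$ and $(3)\Rightarrow(1)$ are exactly the paper's applications of Lemma \ref{lem2}, and for $(1)\Rightarrow(2)$, $(1)\Rightarrow(3)$ both arguments split the relevant sequence and compose with the resulting retraction $f'$ (resp.\ section $g'$) to hit $s\alpha$. The only cosmetic difference is that you first reduce to submodule inclusions and natural quotient maps via Theorems \ref{thm1} and \ref{thm2}, while the paper works directly with an arbitrary $u$-$S$-monomorphism $f\colon K\to M$ (resp.\ $u$-$S$-epimorphism $g\colon M\to L$); both are valid.
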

\begin{proof}
    $(1)\Rightarrow (2)$: Let $N$ be any $R$-module and $f:K\to M$ be a $u$-$S$-monomorphism. Then by (1), $0\to K\xrightarrow{f} M\to \text{Coker}(f)\to 0$ $u$-$S$-splits. So there is an $R$-homomorphism $f':M\to K$ and $s\in S$ such that $f'f=s 1_K$. For any $h\in \text{Hom}_R(K,N)$, $sh=hs1_{K}=hf'f=f^{*}(hf')$ and $hf'\in \text{Hom}_R(M,N)$. So $f^*:\text{Hom}_R(M,N)\to \text{Hom}_R(K,N)$ is a $u$-$S$-epimorphism. Thus $N$ is $u$-$S$-injective relative to $M$. Since $N$ was an arbitrary $R$-module, (2) holds. \\
 $(1)\Rightarrow (3)$: Let $N$ be any $R$-module and $g:M\to L$ be a $u$-$S$-epimorphism. Then by (1), $0\to \text{Ker}(g) \to M\xrightarrow{g} L\to 0$ $u$-$S$-splits. So there is an $R$-homomorphism $g':L\to M$ and $s\in S$ such that $gg'=s 1_L$. For any $h\in \text{Hom}_R(N,L)$, $sh=s1_{L}h=gg'h=g_{*}(g'h)$ and $g'h\in \text{Hom}_R(N,M)$. So $g_*:\text{Hom}_R(N,M)\to \text{Hom}_R(N,L)$ is a $u$-$S$-epimorphism. Thus $N$ is $u$-$S$-projective relative to $M$. Since $N$ was an arbitrary $R$-module, (3) holds. \\
$(2)\Rightarrow (1)$: Let $0\to A\to M \to C\to 0$ be a $u$-$S$-exact sequence. Then by (2), $A$ is $u$-$S$-injective relative to $M$. So by Lemma \ref{lem2}, $0\to A\to M \to C\to 0$ $u$-$S$-splits. Hence $M$ is $u$-$S$-semisimple.\\
$(3)\Rightarrow (1)$: Let $0\to A\to M \to C\to 0$ be a $u$-$S$-exact sequence. Then by (3), $C$ is $u$-$S$-projective relative to $M$. Hence by Lemma \ref{lem2}, $0\to A\to M \to C\to 0$ $u$-$S$-splits. Thus $M$ is $u$-$S$-semisimple.
\end{proof}

For an $R$-module $M$, let $E(M)$ denote the injective envelope of $M$.
\begin{thm}\label{thrm}
  Let $R$ be a ring and $S$ a multiplicative subset of $R$. The following statements about an $R$-module $M$ are equivalent:
\begin{enumerate}
    \item[(1)] $M$ is $u$-$S$-semisimple; 
    \item[(2)] For every injective $R$-module $U$ and $f\in \text{End}_{R}(U)$, $\text{Ker}(f)$ is $u$-$S$-injective relative to $M$;
    \item[(3)] For every projective $R$-module $U$ and $f\in \text{End}_{R}(U)$, $\text{Coker}(f)$ is $u$-$S$-projective relative to $M$.
\end{enumerate}
\end{thm}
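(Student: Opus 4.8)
The plan is to derive every implication from Theorem \ref{thm6}, which already characterizes $u$-$S$-semisimplicity of $M$ by the relative injectivity (resp.\ projectivity) of \emph{every} $R$-module. With that in hand, the implications $(1)\Rightarrow(2)$ and $(1)\Rightarrow(3)$ are immediate: if $M$ is $u$-$S$-semisimple, then by Theorem \ref{thm6} every $R$-module is $u$-$S$-injective (resp.\ $u$-$S$-projective) relative to $M$, so in particular $\text{Ker}(f)$ for any injective $U$ and $\text{Coker}(f)$ for any projective $U$ enjoy these properties.

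For $(2)\Rightarrow(1)$ I would show directly that an arbitrary $R$-module $N$ is $u$-$S$-injective relative to $M$ and then invoke Theorem \ref{thm6}. The idea is to realize $N$ as a direct summand of the kernel of an endomorphism of an injective module. Concretely, embed $N$ into its injective envelope $E=E(N)$, form the quotient $E/N$ and embed it into $E'=E(E/N)$, and let $d:E\to E'$ be the composite of the natural surjection $E\to E/N$ with this embedding, so that $\text{Ker}(d)=N$. Setting $U=E\oplus E'$ (injective, being a finite direct sum of injectives) and $f\in\text{End}_{R}(U)$ given by $f(x,y)=(0,d(x))$, one computes $\text{Ker}(f)=N\oplus E'$. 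By hypothesis (2) this kernel is $u$-$S$-injective relative to $M$, so Theorem \ref{thm3}(2) forces the summand $N$ to be $u$-$S$-injective relative to $M$. Since $N$ is arbitrary, Theorem \ref{thm6} yields (1).

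The implication $(3)\Rightarrow(1)$ is handled by the dual construction. I would take a projective presentation $P_1\xrightarrow{d}P_0\to N\to 0$, set $U=P_0\oplus P_1$ (projective) and $f\in\text{End}_{R}(U)$ given by $f(a,b)=(d(b),0)$, so that $\text{Coker}(f)=N\oplus P_1$. By hypothesis (3) this cokernel is $u$-$S$-projective relative to $M$, and Theorem \ref{thm3}(1) isolates $N$ as a $u$-$S$-projective summand relative to $M$; Theorem \ref{thm6} again gives (1).

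The step to get right is the realization: converting the two-term map $d$ between \emph{distinct} injectives (resp.\ projectives) into a genuine endomorphism of a \emph{single} injective (resp.\ projective) module while retaining $N$ as a direct summand of its kernel (resp.\ cokernel). The otherwise inconvenient extra summand $E'$ (resp.\ $P_1$) is precisely what makes the finite-direct-sum decomposition of Theorem \ref{thm3} applicable. I would also record the standard facts used implicitly: a finite direct sum of injective modules is injective, a finite direct sum of projective modules is projective, and every $R$-module admits both an injective envelope and a projective presentation.
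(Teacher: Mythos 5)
Your proof is correct and takes essentially the same approach as the paper: both $(2)\Rightarrow(1)$ and $(3)\Rightarrow(1)$ rest on the same padding trick of turning the map $E(N)\to E\big(E(N)/N\big)$ (resp.\ $P_1\to P_0$) into an endomorphism of the direct sum $U$ and then peeling off the unwanted summand via Theorem \ref{thm3}. The only difference is cosmetic: in $(3)\Rightarrow(1)$ the paper applies the construction just to the cokernel $C$ of a given $u$-$S$-exact sequence $0\to A\to M\to C\to 0$ (via a projective presentation of $M/\mathrm{Ker}(\beta)$) and splits that sequence with Lemma \ref{lem2}, whereas you apply it to an arbitrary module and conclude through Theorem \ref{thm6}, which makes your argument the cleaner mirror of $(2)\Rightarrow(1)$.
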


\begin{proof}
$(1)\Rightarrow (2)$ and  $(1)\Rightarrow (3)$ follow from Theorem \ref{thm6}. \\
$(2)\Rightarrow (1)$: Let $N$ be any $R$-module. We show that $N$ is $u$-$S$-injective relative to $M$. Consider the injective $R$-module $U=E(N)\oplus E\big(\frac{E(N)}{N}\big)$. Define $f:U\to U$ by $f(x,y)=(0,x+N)$. Then it is easy to check that $f\in \text{End}_{R}(U)$ and $\text{Ker}(f)=N\oplus E\big(\frac{E(N)}{N}\big)$. By (2), $\text{Ker}(f)$ is $u$-$S$-injective relative to $M$. So by Theorem \ref{thm3} (2), $N$ is $u$-$S$-injective relative to $M$. Hence, every $R$-module is $u$-$S$-injective relative to $M$ and thus by Theorem \ref{thm6}, $M$ is $u$-$S$-semisimple. \\
$(3)\Rightarrow (1)$: Let $0\to A\xrightarrow{\alpha} M \xrightarrow{\beta} C\to 0$ be a $u$-$S$-exact sequence and $K=\text{Ker}(\beta)$. Then $C$ is $u$-$S$-isomorphic to $\frac{M}{K}$. Let $g:P\to M$ be an epimorphism with $P$ projective. Let $N=\text{Ker}(\eta_Kg)$, where $\eta_K:M\to \frac{M}{K}$ is the natural map. Then $N\leq P$ and $\frac{M}{K}\cong \frac{P}{N}$. There is an epimorphism $h:P'\to N$ with $P'$ projective. Let $U=P\oplus P'$, then $U$ is projective. Now define $f:U\to U$ by $f(x,y)=(h(y),0)$. Then $f\in \text{End}_{R}(U)$ and $\text{Im}(f)=N\oplus 0$. Now $\text{Coker}(f)=\frac{U}{\text{Im}(f)}=\frac{P\oplus P'}{N\oplus 0}\cong \frac{P}{N}\oplus P'$. By (3), $\text{Coker}(f)=\frac{P}{N}\oplus P'$ is $u$-$S$-projective relative to $M$. So by Theorem \ref{thm3} (1) and Proposition \ref{pr1} (1), $\frac{M}{K}\cong\frac{P}{N}$ is $u$-$S$-projective relative to $M$. But $C$ is $u$-$S$-isomorphic to $\frac{M}{K}$, so again by Proposition \ref{pr1} (1), $C$ is $u$-$S$-projective relative to $M$. Hence, by Lemma \ref{lem2}, $0\to A\xrightarrow{\alpha} M \xrightarrow{\beta} C\to 0$ is $u$-$S$-split. Therefore, $M$ is $u$-$S$-semisimple. 
\end{proof}

For a ring $R$, let $\text{Spec}(R)$ denote the set of all prime ideals of $R$ and $\text{Max}(R)$ denote the set of all maximal ideals of $R$.

\begin{lemma}\label{lemloc}
    Let $R$ be a ring and $T$ an $R$-module. Then the following statements are equivalent:
\begin{enumerate}
\item[(1)] $T=0$;
\item[(2)] $T$ is $u$-$(R\setminus \mathfrak{p})$-torsion for any $\mathfrak{p}\in \text{Spec}(R)$;
\item[(3)] $T$ is $u$-$(R\setminus \mathfrak{m})$-torsion for any $\mathfrak{m}\in \text{Max}(R)$.
\end{enumerate}
\end{lemma}
\begin{proof}
$(1)\Rightarrow (2)\Rightarrow (3)$ are clear.\\
$(3)\Rightarrow (1)$: By (3), for any $\mathfrak{m}\in \text{Max}(R)$, there exists $s_{\mathfrak{m}}\in R\setminus\mathfrak{m}$ such that $s_{\mathfrak{m}}T=0$. But since $\langle \{s_{\mathfrak{m}}\mid \mathfrak{m}\in \text{Max}(R)\}\rangle=R$, then there exist $r_1,\cdots,r_n\in R$ and $\mathfrak{m}_1,\cdots, \mathfrak{m}_n\in \text{Max}(R)$ such that $1=r_1s_{\mathfrak{m}_1}+\cdots+r_ns_{\mathfrak{m}_n}$. Hence, we have $T=(r_1s_{\mathfrak{m}_1}+\cdots+r_ns_{\mathfrak{m}_n})T\subseteq r_1s_{\mathfrak{m}_1}T+\cdots+r_ns_{\mathfrak{m}_n}T=0$. Thus $T=0$.   
\end{proof}

Let $\mathfrak{p}$ be a prime ideal of a ring $R$. We say that an $R$-module $U$ is $u$-$\mathfrak{p}$-projective ($u$-$\mathfrak{p}$-injective) relative to a module $M$ if $U$ is $u$-$(R\setminus \mathfrak{p})$-projective ($u$-$(R\setminus \mathfrak{p})$-injective) relative to $M$.  
    
\begin{proposition}\label{localr}
Let $R$ be a ring and $U,M$ be $R$-modules. Then the following statements are equivalent:
\begin{enumerate}
\item[(1)] $U$ is projective (injective) relative to $M$;
\item[(2)] $U$ is $u$-$\mathfrak{p}$-projective ($u$-$\mathfrak{p}$-injective) relative to $M$ for any $\mathfrak{p}\in \text{Spec}(R)$;
\item[(3)] $U$ is $u$-$\mathfrak{m}$-projective ($u$-$\mathfrak{m}$-injective) relative to $M$ for any $\mathfrak{m}\in \text{Max}(R)$.
\end{enumerate}
\end{proposition}

\begin{proof}
We will prove only the case of relative projectivity. The implications $(1)\Rightarrow (2)\Rightarrow (3)$ are clear.\\
$(3)\Rightarrow (1)$: Let $f:M\to N$ be any epimorphism. We show that the map $$f_*: \text{Hom}_R(U,M)\to \text{Hom}_R(U,N)$$ is an epimorphism. That is, $\text{Coker}(f_*)=0$. By $(3)$ and Theorem \ref{thm1}, $\text{Coker}(f_*)$ is $u$-$(R\setminus m)$-torsion for any $\mathfrak{m}\in \text{Max}(R)$. Thus by Lemma \ref{lemloc}, $\text{Coker}(f_*)=0$. Therefore, $U$ is projective relative to $M$. 
\end{proof}

\section{ $u$-$S$-quasi-projective and $u$-$S$-quasi-injective modules}
We begin this section with the following definition:

\begin{definition}\label{def2}
    Let $R$ be a ring and $S$ a multiplicative subset of $R$. An $R$-module $M$ is said to be $u$-$S$-quasi-projective ($u$-$S$-quasi-injective) if $M$ is $u$-$S$-projective ($u$-$S$-injective) relative to $M$.
\end{definition}

\begin{remark}\label{rem2}
 Let $R$ be a ring and $S$ a multiplicative subset of $R$.
\begin{enumerate}
    \item[(1)] By Remark \ref{rem1}, every $u$-$S$-projective ($u$-$S$-injective) $R$-module is $u$-$S$-quasi-projective ($u$-$S$-quasi-injective).
    \item[(2)] By Corollary \ref{coro}, every quasi-projective (quasi-injective) $R$-module is $u$-$S$-quasi-projective ($u$-$S$-quasi-injective).
\end{enumerate}
    
\end{remark} 

 The following example gives a $u$-$S$-quasi-projective module that is not $u$-$S$-projective.

\begin{example} \label{ex1}
    Let $R=\mathbb{Z}$ and $S=\{1,2,3,\cdots\}$. Let $\mathbb{P}$ be the set of all prime numbers. Then $M:=\bigoplus\limits_{p\in\mathbb{P}}\mathbb{Z}_p$ is a quasi-projective $R$-module \cite{AF} and hence it is $u$-$S$-quasi-projective by Remark \ref{rem2} (2). But $\text{Ext}_{R}^{1}(M,\mathbb{Z})\cong \prod\limits_{p\in\mathbb{P}}\text{Ext}_{R}^{1}(\mathbb{Z}_p,\mathbb{Z})\cong \prod\limits_{p\in\mathbb{P}}\mathbb{Z}_p$ is not $u$-$S$-torsion, so $M$ is not $u$-$S$-projective by \cite[Proposition 2.9]{ZQ}.
\end{example}
   
\begin{proposition}\label{local}
      Let $R$ be a ring and $M$ an $R$-module. Then the following statements are equivalent:
      \begin{enumerate}
          \item[(1)] $M$ is quasi-projective (quasi-injective);
          \item[(2)] $M$ is $u$-$\mathfrak{p}$-quasi-projective ($u$-$\mathfrak{p}$-quasi-injective) for any $\mathfrak{p}\in \text{Spec}(R)$;
          \item[(3)] $M$ is $u$-$\mathfrak{m}$-quasi-projective ($u$-$\mathfrak{m}$-quasi-injective) for any $\mathfrak{m}\in \text{Max}(R)$.
      \end{enumerate}
\end{proposition}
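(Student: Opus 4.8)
The plan is to prove the projective and injective statements in parallel, since they are formally dual, reducing both to the submodule criteria of Theorems~\ref{thm2} and~\ref{thm1}. For a submodule $K\leq M$, write $C_K^{\mathrm{p}}:=\text{Coker}\big((\eta_K)_*\big)$ and $C_K^{\mathrm{i}}:=\text{Coker}\big((i_K)^*\big)$. Taking the trivial multiplicative set $S=\{1\}$ in Theorems~\ref{thm2} and~\ref{thm1} (here a $u$-$\{1\}$-torsion module is forced to be $0$, so a $u$-$\{1\}$-epimorphism is just an epimorphism) recovers the classical fact that $M$ is quasi-projective (quasi-injective) precisely when $C_K^{\mathrm{p}}=0$ (resp. $C_K^{\mathrm{i}}=0$) for every $K\leq M$. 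Likewise, by Theorems~\ref{thm2} and~\ref{thm1}, $M$ is $u$-$\mathfrak{p}$-quasi-projective (resp. $u$-$\mathfrak{p}$-quasi-injective) precisely when $C_K^{\mathrm{p}}$ (resp. $C_K^{\mathrm{i}}$) is $u$-$(R\setminus\mathfrak{p})$-torsion for every $K\leq M$.

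For $(1)\Rightarrow(2)$ I would observe that quasi-projectivity (quasi-injectivity) gives $C_K=0$ for all $K$, and the zero module is $u$-$S$-torsion for every multiplicative set $S$ (take $s=1$); hence $M$ is $u$-$S$-quasi-projective (quasi-injective) for every $S$, in particular for $S=R\setminus\mathfrak{p}$ at each prime $\mathfrak{p}$. The implication $(2)\Rightarrow(3)$ is immediate from $\text{Max}(R)\subseteq\text{Spec}(R)$.

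The substance is $(3)\Rightarrow(1)$, which I would base on a standard local-to-global observation about annihilators. Fixing $K\leq M$ and setting $N:=C_K^{\mathrm{p}}$ (resp. $C_K^{\mathrm{i}}$), by $(3)$ and the criteria above, for each $\mathfrak{m}\in\text{Max}(R)$ there is $s\in R\setminus\mathfrak{m}$ with $sN=0$, so $s\in\Ann(N)\setminus\mathfrak{m}$ and hence $\Ann(N)\not\subseteq\mathfrak{m}$. As this holds for every maximal ideal, the ideal $\Ann(N)$ is contained in no maximal ideal; since every proper ideal lies in some maximal ideal, this forces $\Ann(N)=R$, whence $1\in\Ann(N)$ and $N=0$. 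Thus $(\eta_K)_*$ (resp. $(i_K)^*$) is surjective for every $K\leq M$, and therefore $M$ is quasi-projective (resp. quasi-injective).

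I expect the only delicate point, and the step worth stating carefully, to be the alignment of the two characterizations at the outset: one must be sure that ordinary quasi-projectivity is equivalent to the vanishing of all $C_K^{\mathrm{p}}$ (and dually for quasi-injectivity), so that the annihilator argument, which only yields $N=0$, genuinely produces the \emph{classical} notion rather than a $u$-$S$ weakening. Once this is secured via the $S=\{1\}$ specialization above, the annihilator argument closes both cases identically.
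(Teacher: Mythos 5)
Your proof is correct and follows essentially the same route as the paper: the only substantive implication is $(3)\Rightarrow(1)$, and your observation that $\Ann(N)$ meets $R\setminus\mathfrak{m}$ for every maximal ideal $\mathfrak{m}$ and hence equals $R$ is exactly the paper's step of writing $1=r_1s_{\mathfrak{m}_1}+\cdots+r_ns_{\mathfrak{m}_n}$ and concluding that the cokernel vanishes. The only (harmless) difference is that you first reduce to the submodule criterion of Theorems~\ref{thm2} and~\ref{thm1} via the $S=\{1\}$ specialization, whereas the paper applies the annihilator argument directly to $\mathrm{Coker}(f_*)$ for an arbitrary epimorphism $f:M\to N$.
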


\begin{proof}
This follows from Proposition \ref{localr}. 
\end{proof}

 The following example provides a $u$-$S$-quasi-injective module that is not $u$-$S$-injective.
 
\begin{example}\label{ex2}
    Let $R=\mathbb{Z}$ and $M=\mathbb{Z}_2$. Then $M$ is a simple $R$-module and hence it is quasi-injective. But $M$ is not injective $R$-module, so by \cite[proposition 4.8]{QK}, $M$ is not $u$-$\mathfrak{m}$-injective for some maximal ideal $\mathfrak{m}$ of $R$. However, $M$ is $u$-$\mathfrak{m}$-quasi-injective by Remark \ref{rem2} (2). 
\end{example}

\begin{proposition}\label{psemi}
 Let $S$ be a multiplicative subset of a ring $R$ and $M$ a $u$-$S$-semisimple $R$-module. Then $M$ is both $u$-$S$-quasi-injective and $u$-$S$-quasi-projective.
\end{proposition}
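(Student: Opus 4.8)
The plan is to derive the proposition immediately from Theorem \ref{thm6}, since it is a direct specialization of the universal statements there. Recall from Definition \ref{def2} that $M$ being $u$-$S$-quasi-injective (resp. $u$-$S$-quasi-projective) means precisely that $M$ is $u$-$S$-injective (resp. $u$-$S$-projective) relative to $M$ itself. Theorem \ref{thm6} already records the equivalence of three conditions on $M$: being $u$-$S$-semisimple, every $R$-module being $u$-$S$-injective relative to $M$, and every $R$-module being $u$-$S$-projective relative to $M$.

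First I would invoke the implication $(1)\Rightarrow(2)$ of Theorem \ref{thm6}: assuming $M$ is $u$-$S$-semisimple, every $R$-module is $u$-$S$-injective relative to $M$. Instantiating the quantifier ``every $R$-module'' at the module $M$ itself yields that $M$ is $u$-$S$-injective relative to $M$, which by Definition \ref{def2} is exactly the assertion that $M$ is $u$-$S$-quasi-injective. I would then invoke $(1)\Rightarrow(3)$ of the same theorem: every $R$-module is $u$-$S$-projective relative to $M$, and again specializing to $M$ gives that $M$ is $u$-$S$-projective relative to $M$, i.e. $M$ is $u$-$S$-quasi-projective.

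There is essentially no obstacle here; the entire content has been absorbed into Theorem \ref{thm6}, and all that remains is to read off the two special cases. The only point requiring any care is to unwind Definition \ref{def2} correctly, so that the quantified conditions (2) and (3) of Theorem \ref{thm6} are properly instantiated at $M$ to match the definitions of $u$-$S$-quasi-injective and $u$-$S$-quasi-projective.
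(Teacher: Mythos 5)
Your proposal is correct and coincides with the paper's own proof: both apply Theorem \ref{thm6}, implications $(1)\Rightarrow(2)$ and $(1)\Rightarrow(3)$, and then instantiate ``every $R$-module'' at $M$ itself, invoking Definition \ref{def2}. Nothing further is needed.
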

    
\begin{proof}
    Let $M$ be a $u$-$S$-semisimple module. Then by Theorem \ref{thm6}, every $R$-module is $u$-$S$-injective ($u$-$S$-projective) relative to $M$. In particular, $M$ is $u$-$S$-injective ($u$-$S$-projective) relative to $M$. Thus $M$ is both $u$-$S$-quasi-injective and $u$-$S$-quasi-projective.
\end{proof}

The following example gives a $u$-$S$-quasi-injective module that is not quasi-injective.

\begin{example} \label{ex3}
    Let $R=\mathbb{Z}$ and $S=\mathbb{Z}\setminus\{0\}$. By \cite[Example 3.7]{ZQ}, $\mathbb{Z}$ is a $u$-$S$-semisimple $\mathbb{Z}$-module. So by Proposition \ref{psemi}, $\mathbb{Z}$ is a $u$-$S$-quasi-injective $\mathbb{Z}$-module. However, $\mathbb{Z}$ is not a quasi-injective $\mathbb{Z}$-module by \cite[Example 2.3]{KG}.
\end{example}

\begin{proposition}\label{prop1}
  Let $S$ be a multiplicative subset of a ring $R$. Then $\bigoplus\limits_{i=1}^{n}A_i$ is $u$-$S$-quasi-projective ($u$-$S$-quasi-injective) if and only if $A_i$ is $u$-$S$-projective ($u$-$S$-injective) relative to $A_j$ for each $i,j=1,2,\cdots,n$.  
\end{proposition}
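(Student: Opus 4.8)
The plan is to reduce the statement directly to the two decomposition results already established in Section \ref{d}, namely Theorem \ref{thm3} together with Theorem \ref{thm4} for the projective half, and Theorem \ref{thm3} together with Theorem \ref{thm5} for the injective half. Write $M=\bigoplus_{i=1}^{n}A_i$. By Definition \ref{def2}, saying that $M$ is $u$-$S$-quasi-projective is precisely saying that $M$ is $u$-$S$-projective relative to $M$, so the entire task is to characterize when $\bigoplus_{i=1}^{n}A_i$ is $u$-$S$-projective relative to $\bigoplus_{j=1}^{n}A_j$.

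First I would peel off the \emph{outer} direct sum using Theorem \ref{thm3} (1): the module $\bigoplus_{i=1}^{n}A_i$ is $u$-$S$-projective relative to $M$ if and only if each summand $A_i$ is $u$-$S$-projective relative to $M=\bigoplus_{j=1}^{n}A_j$. Then I would peel off the \emph{inner} direct sum using Theorem \ref{thm4} (2): for each fixed $i$, the module $A_i$ is $u$-$S$-projective relative to $\bigoplus_{j=1}^{n}A_j$ if and only if $A_i$ is $u$-$S$-projective relative to each $A_j$. Chaining these two equivalences yields that $M$ is $u$-$S$-projective relative to $M$ if and only if $A_i$ is $u$-$S$-projective relative to $A_j$ for every pair $i,j\in\{1,\dots,n\}$, which is exactly the assertion. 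The parenthetical injective case is handled identically, with Theorem \ref{thm3} (2) in place of Theorem \ref{thm3} (1) and Theorem \ref{thm5} (2) in place of Theorem \ref{thm4} (2); no separate argument is needed beyond substituting ``injective'' for ``projective''.

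There is no genuine obstacle here, since both directions fall out by unwinding the biconditionals in the cited theorems. The only point that demands care is the bookkeeping on the index ranges: one must check that the combination ``each $A_i$ relative to $\bigoplus_j A_j$'' followed by ``$A_i$ relative to each $A_j$'' really produces all ordered pairs $(i,j)$ with $1\le i,j\le n$, including the diagonal terms $i=j$ that govern the quasi-projectivity of the individual summands. I would state this index sweep explicitly when writing up the chaining, so that the reader sees all $n^2$ relative-projectivity conditions being captured.

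\begin{proof}
Write $M=\bigoplus_{i=1}^{n}A_i$. By Definition \ref{def2}, $M$ is $u$-$S$-quasi-projective if and only if $M$ is $u$-$S$-projective relative to $M=\bigoplus_{j=1}^{n}A_j$. By Theorem \ref{thm3} (1), this holds if and only if each $A_i$ is $u$-$S$-projective relative to $\bigoplus_{j=1}^{n}A_j$. By Theorem \ref{thm4} (2), for each fixed $i$ the module $A_i$ is $u$-$S$-projective relative to $\bigoplus_{j=1}^{n}A_j$ if and only if $A_i$ is $u$-$S$-projective relative to each $A_j$. Combining these equivalences, $M$ is $u$-$S$-quasi-projective if and only if $A_i$ is $u$-$S$-projective relative to $A_j$ for all $i,j=1,2,\dots,n$. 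The $u$-$S$-quasi-injective case is proved in the same way, using Theorem \ref{thm3} (2) and Theorem \ref{thm5} (2) in place of Theorem \ref{thm3} (1) and Theorem \ref{thm4} (2).
\end{proof}
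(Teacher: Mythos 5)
Your proposal is correct and follows exactly the paper's own argument: both peel off the outer direct sum with Theorem \ref{thm3} and the inner one with Theorem \ref{thm4}(2) (resp.\ Theorem \ref{thm5}(2)), chaining the resulting biconditionals. No substantive difference.
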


\begin{proof}
    By Theorems \ref{thm3}, \ref{thm4}, and \ref{thm5}, we have $\bigoplus\limits_{i=1}^{n}A_i$ is $u$-$S$-quasi-projective ($u$-$S$-quasi-injective) if and only if $\bigoplus\limits_{i=1}^{n}A_i$ is $u$-$S$-projective ($u$-$S$-injective) relative to $\bigoplus\limits_{i=1}^{n}A_i$ if and only if $A_i$ is $u$-$S$-projective ($u$-$S$-injective) relative to $\bigoplus\limits_{j=1}^{n}A_j$ for each $i=1,2,\cdots,n$ if and only if $A_i$ is $u$-$S$-projective ($u$-$S$-injective) relative to $A_j$ for each $i,j=1,2,\cdots,n$.    
\end{proof}

\begin{proposition}\label{propq2}
     Let $S$ be a multiplicative subset of a ring $R$ and $B$ a $u$-$S$-quasi-projective $R$-module. Then an exact sequence $0\to A\xrightarrow{f} B\xrightarrow{g} C\to 0$ is $u$-$S$-split if and only if $C$ is $u$-$S$-projective relative to $B$.
\end{proposition}

\begin{proof}
 Let $B$ be a $u$-$S$-quasi-projective module and let $0\to A\xrightarrow{f} B\xrightarrow{g} C\to 0$ be a $u$-$S$-split exact sequence. Then by \cite[Lemma 2.8]{KMOZ}, $B$ is $u$-$S$-isomorphic to $A \oplus C$. So by Proposition \ref{pr1}, $A \oplus C$ is $u$-$S$-quasi-projective. Hence by Proposition \ref{prop1}, $C$ is $u$-$S$-projective relative to both $A$ and $C$. Thus by Proposition \ref{pr2}, $C$ is $u$-$S$-projective relative to $B$. The converse follows from Lemma \ref{lem2}.
\end{proof}

\begin{proposition}
     Let $S$ be a multiplicative subset of a ring $R$ and $B$ a $u$-$S$-quasi-injective $R$-module. Then an exact sequence $0\to A\xrightarrow{f} B\xrightarrow{g} C\to 0$ is $u$-$S$-split if and only if $A$ is $u$-$S$-injective relative to $B$.
\end{proposition}

\begin{proof}
    The proof is similar to that of Proposition \ref{propq2}
\end{proof}

Lastly, we give a characterization of $u$-$S$-semisimple rings in terms of $u$-$S$-quasi-injective and $u$-$S$-quasi-projective modules.

\begin{thm}\label{thm7}
  Let $R$ be a ring and $S$ a multiplicative subset of $R$. Then the following statements are equivalent:
\begin{enumerate}
    \item[(1)] $R$ is a $u$-$S$-semisimple ring; 
    \item[(2)] Every $R$-module is $u$-$S$-quasi-injective;
    \item[(3)] Every $R$-module is $u$-$S$-quasi-projective.
\end{enumerate}
\end{thm}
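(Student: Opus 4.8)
The plan is to route everything through the characterization of $u$-$S$-semisimple modules in Theorem \ref{thm6} and the splitting criterion of Lemma \ref{lem2}, so that the reverse implications become short and the forward implications reduce to a quotient-stability observation combined with Proposition \ref{psemi}. Since (2) and (3) assert that \emph{every} $R$-module is $u$-$S$-quasi-injective, respectively $u$-$S$-quasi-projective, and Proposition \ref{psemi} already gives that any $u$-$S$-semisimple module enjoys both properties, the content of $(1)\Rightarrow(2),(3)$ is precisely that $u$-$S$-semisimplicity of the ring forces \emph{every} module to be $u$-$S$-semisimple.

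For $(1)\Rightarrow(2)$ and $(1)\Rightarrow(3)$ I would first record that a quotient of a $u$-$S$-semisimple module is again $u$-$S$-semisimple. Indeed, if $B$ is $u$-$S$-semisimple and $K\leq B$, then by Theorem \ref{thm6} every $R$-module $N$ is $u$-$S$-projective relative to $B$, and Theorem \ref{thm4}(1) then yields that $N$ is $u$-$S$-projective relative to $\frac{B}{K}$; as $N$ was arbitrary, Theorem \ref{thm6} gives that $\frac{B}{K}$ is $u$-$S$-semisimple. Now any $R$-module $M$ is a quotient of a free module $F$, and $F$ is $u$-$S$-semisimple because $R$ is $u$-$S$-semisimple; hence $M$ is $u$-$S$-semisimple. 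Proposition \ref{psemi} then shows that $M$ is both $u$-$S$-quasi-injective and $u$-$S$-quasi-projective, establishing (2) and (3).

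For $(3)\Rightarrow(1)$ I would take an arbitrary free module $F$ together with a $u$-$S$-exact sequence $0\to A\to F\to C\to 0$ and show it $u$-$S$-splits. Applying the hypothesis to $F\oplus C$, this module is $u$-$S$-quasi-projective, so Proposition \ref{prop1} forces $C$ to be $u$-$S$-projective relative to $F$, and Lemma \ref{lem2} then makes the sequence $u$-$S$-split. Thus every free module is $u$-$S$-semisimple, i.e. $R$ is $u$-$S$-semisimple. The implication $(2)\Rightarrow(1)$ is entirely dual: feeding $A\oplus F$ into the injective version of Proposition \ref{prop1} gives that $A$ is $u$-$S$-injective relative to $F$, and the injective half of Lemma \ref{lem2} splits the same sequence.

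The only delicate point I expect is the quotient-stability step in the forward direction: I must make sure Theorem \ref{thm4}(1) and Theorem \ref{thm6} are invoked for arbitrary modules, since the free module $F$ presenting $M$ may have infinite rank and $M$ is unrestricted, and that no finiteness hypothesis slips in. By contrast the reverse implications are comparatively routine once $F\oplus C$ (respectively $A\oplus F$) is handed to Proposition \ref{prop1}; the only care needed there is the index bookkeeping that extracts the correct one-sided relative statement ($C$ relative to $F$, respectively $A$ relative to $F$) from the symmetric conclusion of Proposition \ref{prop1}.
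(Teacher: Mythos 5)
Your proof is correct, but it differs from the paper's in a way worth noting. For $(1)\Rightarrow(2),(3)$ the paper simply cites \cite[Theorem 3.5]{ZQ} (every module over a $u$-$S$-semisimple ring is $u$-$S$-injective, resp.\ $u$-$S$-projective) and passes to the quasi- versions; you instead give a self-contained argument inside the paper's own machinery: quotient-stability of $u$-$S$-semisimplicity (Theorem \ref{thm6} combined with Theorem \ref{thm4}(1), neither of which carries a finiteness hypothesis, so your worry about infinite-rank free modules is unfounded), then a free presentation of an arbitrary module, then Proposition \ref{psemi}. This buys independence from the external reference at the cost of a slightly longer argument. For the reverse implications the core idea is identical to the paper's --- feed $M\oplus N$ (resp.\ $F\oplus C$, $A\oplus F$) into Proposition \ref{prop1} to extract a one-sided relative statement --- but you apply it directly to free modules and conclude via Lemma \ref{lem2} that every $u$-$S$-exact sequence over $F$ splits, which is exactly the definition of $R$ being $u$-$S$-semisimple; the paper instead proves that \emph{every} module is $u$-$S$-semisimple via Theorem \ref{thm6} and then invokes \cite[Theorem 3.5]{ZQ} once more. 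Your route is marginally more economical and keeps the whole theorem internal to the paper; the paper's is shorter on the page but leans twice on the external characterization.
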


\begin{proof}
    $(1)\Rightarrow (2)$ and $(1)\Rightarrow (3)$ follow from \cite[Theorem 3.5]{ZQ} and Remark \ref{rem2} (1).\\
 $(2)\Rightarrow (1)$: Let $M$ be any $R$-module. Then for any $R$-module $N$, $M\oplus N$ is $u$-$S$-quasi-injective by (2). So for any $R$-module $N$, $N$ is $u$-$S$-injective relative to $M$ by Proposition \ref{prop1}. Thus, every $R$-module is $u$-$S$-injective relative to $M$. Hence, by Theorem \ref{thm6}, $M$ is $u$-$S$-semisimple. Since $M$ was an arbitrary $R$-module, every $R$-module is $u$-$S$-semisimple. Thus, by \cite[Theorem 3.5]{ZQ}, $R$ is a $u$-$S$-semisimple ring. \\ 
  $(3)\Rightarrow (1)$: Similar to the proof of the implication $(2)\Rightarrow (1)$.  
\end{proof}


\begin{thebibliography}{99}

\normalsize
\baselineskip=17pt
\bibitem{MM1} M. Adarbeh and M. Saleh, Uniformly $S$-pseudo-injective modules, Ann Univ Ferrara, 2026, to appear.
\bibitem{MM2} M. Adarbeh and M. Saleh, Uniformly $S$-pseudo-projective modules, https://doi.org/10.48550/arXiv.2510.10170
\bibitem{AF} F. W. Anderson and K. R. Fuller, Rings and Categories of Modules (Springer- 
Verlag 1974).

\bibitem{KMOZ} H. Kim, N. Mahdou, E. H. Oubouhou, and X. Zhang, Uniformly $S$-projective modules and uniformly $S$-projective uniformly $S$-covers, Kyungpook Math. J., 64 (2024), 607–618.

\bibitem{KG} V. Kumar, A. J. Gupta, B. M. Pandeya, and M. K. Patel, M-C-Pseudo Injective Modules, East-West J. of Mathematics: Vol. 14, No 1(2012) pp. 67-75.

 \bibitem{QK}  W. Qi, H. Kim, F. G. Wang, M. Z. Chen, and W. Zhao, Uniformly $S$-Noetherian rings, Quaest.
 Math., 47(5) (2023), 1019–1038.

\bibitem{WK} F. Wang and H. Kim, Foundations of Commutative Rings and Their Modules, Algebra and
 Applications, vol. 22, Springer, Singapore, 2016.
 
\bibitem{ZQ} X. L. Zhang and W. Qi, Characterizing $S$-projective modules and $S$-semisimple rings by
 uniformity, J. Commut. Algebra, 15(1) (2023), 139-149. 
 
\bibitem{Z} X. L. Zhang, Characterizing $S$-flat modules and $S$-von Neumann regular rings by uniformity,
 Bull. Korean Math. Soc., 59(3) (2022), 643-657.



\end{thebibliography}
\end{document}